\newfont{\Bb}{msbm10 scaled1200}
\newfont{\bb}{msbm8 scaled1200}
\def\CC{\mathbb{C}}
\def\eop{\hspace*{\fill}$\Box$ \vskip \baselineskip}
\def\OO{\mathcal O}
\def\beq{\begin{equation}}
\def\eeq{\end{equation}}
\def\beqa{\begin{eqnarray}}
\def\eeqa{\end{eqnarray}}
\def\ld{\ldots}
\def\cd{\cdots}
\def\DD{^\circ\hskip -2pt D}
\def\ID{I\hskip -1pt D}
\newtheorem{thm}{Theorem} [section]
\newtheorem*{mainthm}{Main Theorem}
\newtheorem{cor}[thm]{Corollary}
\newtheorem{lem}[thm]{Lemma}
\newtheorem{prop}[thm]{Proposition}
\theoremstyle{definition}
\newtheorem{defn}[thm]{Definition}
\newtheorem{example}[thm]{Example}
\newtheorem{rem}[thm]{Remark}
\begin{document}

\title{Free resolutions for multiple point spaces}

\author{Ay{\c s}e Alt{\i}nta{\c s}}
\address{Department of Mathematics, Y{\i}ld{\i}z Technical University, Esenler 34210, Istanbul, Turkey}
\email{aysea@yildiz.edu.tr}
\thanks{The first author was supported by a Vice Chancellor's scholarship at the University of Warwick.}

\author{David Mond}
\address{Mathematics Institute, the University of Warwick, Coventry CV4 7AL, UK}
\email{D.M.Q.Mond@warwick.ac.uk}

\subjclass[2000]{58K20}

\date{November 10, 2011}



\begin{abstract}
Let $f:(\CC^n,0)\to(\CC^{n+1},0)$ be a map-germ of corank 1, and, for $1\leq k\leq
\text{multiplicity}(f)$, let $D^k(f)$ be its
$k$'th multiple-point scheme -- the closure of the set of ordered $k$-tuples of 
pairwise distinct points sharing the same image. There are natural projections 
$D^{k+1}(f)\to D^k(f)$, determined by forgetting one member of the $k+1$-tuple. 
We prove that the matrix of a presentation of 
$\OO_{D^{k+1}(f)}$ over $\OO_{D^k(f)}$ appears as a certain 
submatrix of the matrix of a suitable presentation of 
$\OO_{\CC^n}$ over $\OO_{\CC^{n+1}}$. This does not happen for germs of corank $>1$.  
\end{abstract}
\pagenumbering{arabic}

\maketitle

\section{Introduction}

The multiple point spaces of a map-germ $(\CC^n,0)\to(\CC^p,0)$ with $n<p$ play an important r\^ole in the study of 
its geometry, as well as the topology of the image of a stable 
perturbation (\cite{mond87}, \cite{marar-mond}, \cite{goryunov-mond}). 

Formally, the $k$'th multiple point space $D^k$ of a finite proper map between topological 
spaces is the closure of the set of $k$-tuples of pairwise distinct points having 
the same image under the map.
A closed formula for an ideal defining $D^k(f)$ in some smooth ambient space
is in general not available. However, for $k=2$ the ideal 
\begin{equation}\label{d2} \mathcal{I}_2:=(f\times f)^*I_{\Delta_p}+\textnormal{Fitt}_0(
I_{\Delta_n}/(f\times f)^*I_{\Delta_p})
\end{equation}
where $I_{\Delta_n}$ and $I_{\Delta_p}$ are the ideal sheaves defining the diagonals 
$\Delta_n$ in $\mathbb{C}^n\times \mathbb{C}^n$ and $\Delta_p$ in $\mathbb{C}^p\times 
\mathbb{C}^p$, gives a scheme structure with many desirable qualities: if $f$
is dimensionally correct -- that is, if $D^2(f)$ has the expected dimension, $2n-p$,
then $D^2(f)$ is Cohen Macaulay. If moreover $f$ is finitely determined (for left-right equivalence), or, equivalently, has isolated instability, then provided its dimension is greater than $0$,
$D^2(f)$ is reduced. 

If the corank of $f$ (the dimension of $\mbox{Ker}\,df_0$), is equal to 1,
much more is possible. An explicit list of generators for the ideal defining $D^k(f)$
in $(\CC^n)^k$ was given in  
\cite{mond87} and 
\cite{marar-mond}. The second paper shows that a 
finite corank 1 map-germ $f\colon (\CC^n,0)\to(\CC^p,0)$ is
stable if and only if each $D^k(f)$ is smooth of dimension $p-k(p-n)$, or empty, 
for all $k\geq 2$. Moreover, it is 
finitely $\mathcal{A}$-determined if and only if $D^k$ is an ICIS of 
dimension $p-k(p-n)$ or empty for those $k$ with $p-k(p-n)\geq 0$, and $D^k$ 
consists at most of only the origin if $p-k(p-n)<0$ 
(see, e.g., \cite{marar}, \cite{goryunov-mond} for other results).

Any map-germ of corank 1 can be written with respect to suitable coordinates, in the
form 
\beq\label{lac}
f(\mathbf{x},y)=\bigl(\mathbf{x}, f_n(\mathbf{x},y),\ld, f_{p}(\mathbf{x},y)\bigr).
\eeq
where $\mathbf{x}:=x_1,\ld,x_{n-1}$ and $y$ together make a coordinate system on $\CC^n$. 
Provided $f$ is finite, it follows that 
the local algebra $Q(f):=\OO_{\CC^n,0}/f^*\mathfrak{m}_{\CC^{p},0}$ is isomorphic to
$\CC[y]/(y^{r+1})$, where $r+1$ is the multiplicity of $f$, and
$\OO_{\CC^{n},0}$ is minimally generated over $\OO_{\CC^{n+1},0}$ by 
$1, y,\ld, y^r.$
The main result of this paper relates a presentation of $\OO_{\CC^n}$ as $\OO_{\CC^{n+1}}$-module via $f$ to presentations of $\OO_{D^{k+1}(f)}$ as $\OO_{D^k(f)}$-module via the projection $\pi^{k+1}_k\colon D^{k+1}(f)\to D^k(f)$ which forgets the last component. Denote by $f^{(k)}$ the natural map $D^k(f)\to\CC^{n+1}$ induced by $f$. 

\begin{mainthm}\label{thmA} Suppose that 
$f\colon (\CC^n,0)\to(\CC^{n+1},0)$ is a finite and generically one-to-one map-germ of corank 1 for which $D^k(f)$ has dimension 
$n-k+1$ or is empty, and that $y\in\OO_{\CC^n,0}$ 
is a germ such that $1,y,\ld,y^r$  generate 
$\OO_{\CC^n,0}$ over $\OO_{\CC^{n+1},0}$. If \begin{equation*}  0\rightarrow 
\mathcal{O}_{\mathbb{C}^{n+1},0}^{r+1} \xrightarrow{\Lambda} \mathcal{O}_{\mathbb{C}
^{n+1},0}^{r+1} \xrightarrow{G} \mathcal{O}_{\mathbb{C}^{n},0}\rightarrow 0 
\end{equation*} is a minimal resolution of   $\mathcal{O}_{\mathbb{C}^n,0}$
with $\Lambda$ symmetric and $G=\left[\begin{array}{cccc} 1 & y & \cdots 
& y^r\end{array}\right]$, then for any $k=1,\ldots, \textnormal{min}(r,n)$, there 
is an exact sequence
\begin{equation}\label{pdk}  0\rightarrow \mathcal{O}_{D^k(f),0}^{r-k+1}
\xrightarrow{f^{(k)*}(\Lambda^k_k)} \mathcal{O}_{D^k(f),0}^{r-k+1} 
\rightarrow \mathcal{O}_{D^{k+1}(f),0} 
\rightarrow 0\end{equation} in which 
$\Lambda^k_k$ is the matrix obtained from $\Lambda$ by deleting the first $k$ rows and 
columns. 
\end{mainthm}
We will prove this theorem in Section \ref{sect-main}.

In \cite[Prop. 3.2]{klu-curv}, 
Kleiman, Lipman and Ulrich proved that for a finite map $F\colon 
X\rightarrow Y$ of locally Noetherian schemes of dimensions $n$ and $n+1$,
\begin{equation}\label{klueq1} \mathcal{O}_{D^2(F)}\cong \textnormal{Ker}\left(\mu:
\mathcal{O}_{X} \otimes_{\mathcal{O}_{Y}}\mathcal{O}_{X}  \rightarrow\mathcal{O}_{X}
\right)
\end{equation} as $\OO_X$-modules, 
(under some additional hypotheses: $F$ is of corank 1, flat dimension 1, 
and $Y$ satisfies Serre's condition ($\textnormal{S}_2$) \cite[Theorem 11.5 (i)]
{eisenbud}). Here $\mu$ is the multiplication map $a\otimes b\mapsto ab$. Moreover,
\begin{equation}\label{klueq2}\textnormal{Fitt}_i(F^*F_*\mathcal{O}_X)=\textnormal
{Fitt}_{i-1}((\pi^2_1)_*\mathcal{O}_{D^2(F)})\end{equation} for all $i\geq 1$ (\cite[Lemma 3.9]
{klu-curv}). This equality, for a finite and generically one-to-one map-germ of 
corank 1 from $\CC^n$ to $\CC^{n+1}$, is an easy consequence of our main theorem.

In Proposition \ref{pkerm} we prove that for a finite and generically one-to-one map-germ 
$f\colon (\CC^n,0)\to(\CC^{n+1},0)$
of any corank, $\ker(\mu)$ 
has resolution
\begin{equation}\label{sequence0} \begin{CD}
0 & @>>> & \mathcal{O}_{\mathbb{C}^{n},0}^{r} & @>f^*\Lambda^1_1>> &  \mathcal{O}_{
\mathbb{C}^{n},0}^{r} & @>>> & \textnormal{Ker}(\mu)& @>>>  &0
\end{CD} \end{equation}
This, together with the main theorem, 
provides an alternative proof of 
(\ref{klueq1}) for generically one-to-one map-germs of corank 1 in 
dimensions $(n,n+1)$.  

In the corank $\geq 2$ case,  $\textnormal{Ker}(\mu)$ and  
$\mathcal{O}_{D^2(f)}$ are no longer isomorphic. This is evident from the fact 
that $\textnormal{Ker}(\mu)$ is a Gorenstein $\mathcal{O}_{\mathbb{C}^n,0}$-algebra, 
for it has a presentation given by a symmetric matrix (\cite[Theorem 2.3]
{kleiman-ulrich}), whereas $\mathcal{O}_{D^2(f)}$ is not Gorenstein.
However, there exists a map between the 
resolutions of $\mathcal{O}_{D^2(f)}$ and $\mathcal{O}_{\mathbb{C}^{n},0} 
\otimes_{\mathcal{O}_{\mathbb{C}^{n+1}},0}\mathcal{O}_{\mathbb{C}^n,0}$ over 
$\mathcal{O}_{\mathbb{C}^n,0}$ (see Appendix \ref{app}).
Examples suggests that a comparison of the form (\ref{klueq2}) should hold for $i=1,2$ in the case of map-germs of corank $\geq 2$ that are generically one-to-one (see Example \ref{ex-fitpf1}).
In \cite{altintas}, we proved that $\textnormal{Fitt}_1(f^*f_*\mathcal{O}_{\mathbb{C}^n,0})=\textnormal{Fitt}_{0}((\pi^2_1)_*\mathcal{O}_{D^2(f)})$ for finitely $\mathcal{A}$-determined corank $\geq 2$ map-germs in $\mathcal{E}_{3,4}^0$. However, it is still an open problem for the general case. 

This work is based on a part of the first author's PhD thesis submitted at the University of Warwick in 2011. She thanks the university for the financial support for during her stay and the department of Mathematics for the warm welcome.

\section{Background} 

\subsection{\texorpdfstring{$\mathcal{A}$-}{A}equivalence.} 

Let $\mathcal{E}_{n,p}^0$ denote the space of holomorphic map-germs from $(\mathbb{C}^n,0)$ to $(\mathbb{C}^p,0)$. The group $\mathcal{A}:=\textnormal{Diff}(\mathbb{C}^n,0)\times \textnormal{Diff}(\mathbb{C}^p,0)$ of local diffeomorphisms acts on $\mathcal{E}_{n,p}^0$ by $(\phi,\psi)\cdot f=\psi \circ f \circ \phi^{-1}$. A map-germ is $\mathcal{A}$-stable if any of its unfoldings is parametrised-equivalent to a trivial unfolding of the form $f\times 1$.
A map-germ $f\in \mathcal{E}_{n,p}^0$ is $\ell$-$\mathcal{A}$-\textit{determined} if 
every $g\in\mathcal{E}_{n,p}^0$ with the same $\ell$-jet as $f$ is $\mathcal{A}
$-equivalent to $f$. Furthermore, $f$ is  \textit{finitely $\mathcal{A}$-determined}, 
or $\mathcal{A}$-\textit{finite}, if it is $\ell$-$\mathcal{A}$-determined for some 
$\ell<\infty$. By fundamental results of Mather, finite determinacy is equivalent to the finite dimensionality of $T^1_{\mathcal A_e}f:=f^*(\theta_{\CC^p})/tf(\theta_{\CC^n})+f^{-1}(\theta_{\CC^p})$, and thus (if $f$ is not stable) to $0\in\CC^p$ being an isolated point of instability of $f$. 

\subsection{Multiple point spaces. }\label{section-multi}
Given a map $f:X\to Y$, we set 
\beq\label{off} \DD^k(f)=\{(x_1,\ld,x_k)\in X^k| f(x_1)=\cd=f(x_k), x_i\neq x_j \text{ if } i\neq j\}\eeq
and define the $k$'{\it th multiple point space of} $f$, $D^k(f)$, by
\beq\label{cl}D^k(f)=\text{closure }\DD^k(f)\eeq
(where the closure in taken in $X^k$)
{\it provided $\DD^k(f)$ is not empty}. We extend this definition to germs of maps by taking the limit over representatives; if $f\in \mathcal{E}_{n,p}^0$ is finite, the local conical structure guarantees that we obtain in this way a well defined germ at $\mathbf{0}\in (\CC^n)^k$. 
We will shortly endow $D^k(f)$ with an analytic structure in case $f$ is a germ of corank 1. This structure will be compatible with unfolding, and this will lead to a definition of $D^k(f)$ even in cases where $\DD^k(f)$ is empty.  For convenience of notation we sometimes write $D^1(f):=(\mathbb{C}^n,0)$ and $D^0(f):=(\mathbb{C}^p,0)$.

 For any germ $f$ of the form \eqref{lac}, ${D}^k(f)$ can obviously be embedded (as a set) in $(\mathbb{C}^{n-1}\times \mathbb{C}^k,0)$; we give it an analytic structure by means of the ideal
 $\mathcal{I}_k(f)$ generated over $\mathcal{O}_{\mathbb{C}^{n-1}\times\mathbb{C}^k,0}$  by $(k-1)(p-n+1)$ functions $R_i^j$, for $i=1,\ldots, k-1$ and $j=n,\ldots,p$, which are defined iteratively by
\begin{equation}\label{idk} \begin{split} R_1^j(\textbf{x},y_1,y_2)&= \frac{ f_j(\textbf{x},y_2)-f_j(\textbf{x},y_1) }{y_2-y_1}\quad \quad \textnormal{and} \\ R_{i}^j(\textbf{x},y_1,\ldots, y_{i+1})&=\frac{  R_{i-1}^j(\textbf{x},y_1,\ldots, y_{i-1},y_{i+1})-R_{i-1}^j(\textbf{x},y_1,\ldots, y_{i-1},y_{i} )}{y_{i+1}-y_{i}}. \end{split}
\end{equation}
There is a natural action of $S_k$ on ${D}^k(f)$ which permutes the coordinates $y_1,\ldots, y_k$. See \cite[Proposition 3.6]{mond87} for a set of $S_k$-invariant generators for $\mathcal{I}_k(f)$.

Only in the case where it is $0$-dimensional does the set defined by this ideal differ from the 
set defined by \eqref{cl}; the latter is evidently empty in this case. 

For a stable map-germ, $\mathcal{I}_k(f)$ is reduced (\cite[Proposition 2.14]{marar-mond}) for all $k$. 
In \cite[Proposition 2.16]{marar-mond} it is shown that the following definition, which also deals with the case where $D^k(f)$ is zero-dimensional, is compatible with the definition in terms of the ideal ${\mathcal I}_k(f)$ described above in case $\dim\ D^k(f)>0$. 
\begin{defn}[Proposition 2.5 \cite{houston09}, cf. \cite{gaffney-multiple}]\label{multiple} Let $f\in \mathcal{E}_{n,p}^0$ be a finite map-germ of corank 1. Let $F\colon (\mathbb{C}^n\times \mathbb{C}^d,0)\rightarrow(\mathbb{C}^p\times \mathbb{C}^d,0)$ be a stable unfolding of $f$, with $F(\textbf{x},y,\textbf{u}):=(\tilde{F}_{\textbf{u}}(\textbf{x},y),\textbf{u})$ and $\tilde{F}_0(\textbf{x},y)=f(\textbf{x},y)$. Then we set 
\begin{equation} D^k(f)={D}^k(F)\cap \{{u}_1=0,\ldots,{u}_k=0\} \end{equation} 
where ${u}_1,\ldots, {u}_k$ are the unfolding parameters, and $D^k(F)$ is given its reduced structure.
\end{defn} 
It is straightforward to check that this is independent of the choice of stable unfolding, and is compatible with unfolding in the sense that for {\it any} germ of unfolding $F:(\CC^n\times \CC^d,0)\to (\CC^p\times\CC^d,0)$ of 
$f$, the diagram 
\beq\label{fs}
\xymatrix{D^k(f),{0}\ar[d]\ar[r]&D^k(F),{0}
\ar[d]\\
\{0\}\ar[r]&\CC^d,0}\eeq
in which the vertical arrows are projections to the base and the horizontal arrows are inclusions, is a fibre square. 

For a finite $f\in\mathcal{E}_{n,p}^0$, the \textit{$k$'th multiple point space on 
the target} is the set
$$M_k(f)=\{y\in(\mathbb{C}^p,0) \mid |f^{-1}(y)|\geq k\}$$
(where preimages are counted with multiplicity)
with analytic structure defined by the $(k-1)$'st Fitting ideal 
$\textnormal{Fitt}_{k-1}(f_*\mathcal{O}_{\mathbb{C}^n,0})$.   

For $k>\ell$ we define $D^k_\ell(f)$ to be the image in $D^k(f)$ of $D^\ell(f)$ under the composite $\pi^{\ell+1}_\ell\circ\cd\circ\pi^k_{k-1}$.
Then we have set-theoretic equalities 
$f^{(k)}(D^k(f))=M_k(f)$ and $f^{-1}M_k(f)=D^k_1(f)$ for all $k\geq 1$.

We denote by $q(f)$ the multiplicity of $f$, $q(f):=\dim_{\CC}Q(f)$.

If $f\in \mathcal{E}_{n,n+1}^0$  is finite and $Q(f)$ has $\CC$-basis 
$g_0=1,g_1,\ldots,g_r$, 
then there is an induced resolution of $\mathcal{O}_{\mathbb{C}^n,0}$ over 
$\mathcal{O}_{\mathbb{C}^{n+1},0}$ of length 1 of the form
\begin{equation*}  0\rightarrow \mathcal{O}_{\mathbb{C}^{n+1},0}^{r+1} \xrightarrow{\Lambda} \mathcal{O}_{\mathbb{C}^{n+1},0}^{r+1} \xrightarrow{G} \mathcal{O}_{\mathbb{C}^{n},0}\rightarrow 0 \end{equation*}
where $G:=\begin{bmatrix} 1 & g_1 & \ldots & g_r\end{bmatrix}$
and  $\Lambda$ is an $(r+1)\times(r+1)$-matrix with entries $\lambda_{ij} \in \mathcal{O}_{\mathbb{C}^{n+1},0}$ for $i,j=0,\ldots, r$.  Moreover, $\Lambda$ can be chosen to be symmetric (\cite{mond-pellikaan}). In this setup, $\textnormal{Fitt}_{k-1}(f_*\mathcal{O}_{\mathbb{C}^n,0})$ is  the ideal of $(r-k+1)\times(r-k+1)$-minors of $\Lambda$.

\begin{prop} \label{propqf} Let $f\in\mathcal{E}_{n,p}^0$ be a finite analytic map-germ of corank 1. Then 
$D^k(f)=\emptyset$ for $k>q(f)$, and $q(\pi^{k}_{k-1})=q(f)-k+1$ for $k=1,\ld, q(f)$.\end{prop}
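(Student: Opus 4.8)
The plan is to put $f$ into the normal form \eqref{lac} and then read both assertions directly off the explicit generators $R^j_i$ of $\mathcal I_k(f)$ from \eqref{idk}, using nothing beyond the elementary calculus of divided differences. Writing $q:=q(f)$ we have $Q(f)\cong\CC\{y\}/(f_n(\mathbf 0,y),\ld,f_p(\mathbf 0,y))=\CC\{y\}/(y^q)$, so $q=\min_j\operatorname{ord}_y f_j(\mathbf 0,y)$, and I would fix an index $j_0\in\{n,\ld,p\}$ realising this minimum. The first point to establish is that, feeding the standard identity $[\mathbf a,b]h-[\mathbf a,c]h=(b-c)[\mathbf a,b,c]h$ into the recursion \eqref{idk} and inducting on $i$, one gets
\[
R^j_i(\mathbf x,y_1,\ld,y_{i+1})=[y_1,\ld,y_{i+1}]\bigl(y\mapsto f_j(\mathbf x,y)\bigr),
\]
the $i$-th divided difference. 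Newton's interpolation formula then gives the specialisations
\[
R^j_i(\mathbf 0,\underbrace{0,\ld,0}_{i},t)=\frac{f_j(\mathbf 0,t)}{t^{\,i}}\quad(\text{if }\operatorname{ord}_y f_j(\mathbf 0,y)\geq i),\qquad R^j_i(\mathbf 0,\underbrace{0,\ld,0}_{i+1})=\frac1{i!}\,\partial_y^i f_j(\mathbf 0,0);
\]
since $\operatorname{ord}_y f_j(\mathbf 0,y)\geq q$ for every $j$, with equality for $j_0$, the first expression has order $\geq q-i$ in $\CC\{t\}$, with equality when $j=j_0$ and $i\leq q$.

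For the emptiness statement I would take $k>q$, so $q\leq k-1$ and $R^{j_0}_q$ is one of the generators of $\mathcal I_k(f)$. By the formula above, its value at the origin $\mathbf 0$ of $\CC^{n-1}\times\CC^k$ is $\frac1{q!}\partial_y^q f_{j_0}(\mathbf 0,0)$, which is nonzero because $\operatorname{ord}_y f_{j_0}(\mathbf 0,y)=q$. Hence $\mathcal I_k(f)$ contains a unit, so $\mathcal I_k(f)=\OO_{\CC^{n-1}\times\CC^k,0}$ and $D^k(f)=\emptyset$. The same divided difference remains a unit after passing to any stable unfolding, so the conclusion is insensitive to which definition of $D^k(f)$ one adopts.

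For the colength statement, fix $1\leq k\leq q$; the case $k=1$ is the tautology $q(\pi^1_0)=q(f)$ (with $\pi^1_0=f$), so take $k\geq2$. Because $D^{k-1}(f)$ is embedded in $\CC^{n-1}\times\CC^{k-1}$ with maximal ideal $(\mathbf x,y_1,\ld,y_{k-1})$ at the origin, and $\pi^k_{k-1}$ is induced by forgetting $y_k$, we have
\[
Q(\pi^k_{k-1})=\OO_{D^k(f),0}\big/(\mathbf x,y_1,\ld,y_{k-1})=\OO_{\CC^{n-1}\times\CC^{k},0}\big/\bigl(\mathcal I_k(f)+(\mathbf x,y_1,\ld,y_{k-1})\bigr)\cong\CC\{y_k\}/J,
\]
where $J$ is generated by the images $R^j_i(\mathbf 0,0,\ld,0,y_k)$ for $1\leq i\leq k-1$, $n\leq j\leq p$. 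By the specialisations above, those generators with $i<k-1$ are evaluated at $i+1\leq k-1$ coinciding nodes $0$ and hence vanish (since $i<q$), while those with $i=k-1$ equal $f_j(\mathbf 0,y_k)/y_k^{\,k-1}$, of order $\operatorname{ord}_y f_j(\mathbf 0,y)-(k-1)\geq q-k+1$, with equality for $j=j_0$. Thus $J=(y_k^{\,q-k+1})$ in the discrete valuation ring $\CC\{y_k\}$, and $\dim_\CC Q(\pi^k_{k-1})=q-k+1=q(f)-k+1$.

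The divided-difference bookkeeping is routine; the one step where I expect to have to be careful is the identification $\OO_{D^k(f),0}=\OO_{\CC^{n-1}\times\CC^k,0}/\mathcal I_k(f)$ used above, which is the defining structure of $D^k(f)$ only while $\dim D^k(f)>0$ (\cite[Proposition 2.16]{marar-mond}). Should some $D^k(f)$ with $k\leq q$ happen to be $0$-dimensional, I would run the identical computation for a stable unfolding $F$ and invoke the fibre square \eqref{fs} to conclude that the colength $q(f)-k+1$ is unchanged; apart from this, the argument is formal.
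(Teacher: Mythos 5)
Your argument is correct and is essentially the paper's own proof: both reduce $\mathcal{I}_k(f)$ modulo $(\mathbf{x},y_1,\ldots,y_{k-1})$, identify the surviving generators as $R^j_{k-1}(\mathbf 0,\ldots,\mathbf 0,y_k)=f_j(\mathbf 0,y_k)/y_k^{k-1}$, and read off the colength (resp.\ a unit, for $k>q(f)$) in $\CC\{y_k\}$. You merely spell out the divided-difference induction, the unit argument for emptiness, and the zero-dimensional/unfolding caveat that the paper leaves as a ``straightforward calculation''.
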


\begin{proof}
Suppose $f$ is written in the form \eqref{lac}. Then
\begin{eqnarray*} Q(\pi^{k}_{k-1}) &\cong & \frac{\mathcal{O}_{\mathbb{C}^{n-1}\times \mathbb{C}^{k},0}}{\mathcal{I}_{k}(f)+(\textbf{x},y_1,\ldots,y_{k-1})}. \end{eqnarray*}  Since $R_i^j$ belongs to $(\textbf{x},y_1,\ldots,y_{i+1})\mathcal{O}_{\mathbb{C}^{n-1}\times \mathbb{C}^{k+1},0}$ for $1\leq i\leq k-1$ and $n\leq j\leq p$, 
\begin{align*}\mathcal{I}_{k}(f)+
(\textbf{x},y_1,\ldots,y_{k-1}) =& ( R^n_{k-1}(0,\ldots,0, y_{k}), \ldots,  R^p_{k-1}(0,\ldots,0,y_{k})) +\\ &+(\textbf{x},y_1,\ldots,y_{k-1}).  \end{align*}
By an induction, we find
$R^j_{s}(0,\ldots,0,y_{k})=f_j(0,y_{k})/y_{k}^s$. Now the result follows from a straightforward calculation. 
\end{proof}

For a finite map-germ of any corank, $\pi_{k-1}^{k}$ is clearly finite if $D^k(f)$ and $D^{k-1}(f)$ are given their reduced structure. However the relation between these multiplicities is less straightforward than in \ref{propqf} if the corank is not 1. 
\begin{prop}\label{qfqpi} Let $f\in \mathcal{E}_{n,p}^0$ be a finite map-germ of corank $\geq 2$. Then $$q(\pi^2_1)\leq q(f).$$
\end{prop}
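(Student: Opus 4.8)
The plan is to show that the fibre algebra $Q(\pi^2_1)$ is a quotient ring of $Q(f)$; the inequality $q(\pi^2_1)=\dim_\CC Q(\pi^2_1)\le\dim_\CC Q(f)=q(f)$ then follows at once. The mechanism is that $D^2(f)$ is cut out of $(\CC^n\times\CC^n,0)$ by the ideal $\mathcal{I}_2$ of \eqref{d2}, and $\pi^2_1$ is the restriction to $D^2(f)$ of the first projection; so passing to the fibre of $\pi^2_1$ over $\mathbf 0\in D^1(f)=(\CC^n,0)$ amounts to setting the coordinates of the first factor equal to zero, both in the ambient ring and in $\mathcal{I}_2$.

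The first step is to fix notation: write $A:=\mathcal{O}_{\CC^n\times\CC^n,0}$, with coordinates $x=(x_1,\ld,x_n)$ on the first factor and $x'=(x_1',\ld,x_n')$ on the second, put $\mathfrak a:=(x_1,\ld,x_n)A$, and set $K:=(f_j(x)-f_j(x'):1\le j\le p)A$ and $I_{\Delta_n}:=(x_i-x_i':1\le i\le n)A$, so that $\mathcal{I}_2=K+\textnormal{Fitt}_0(I_{\Delta_n}/K)$ by \eqref{d2}. Since $\pi^2_1$ is induced by the projection $(x,x')\mapsto x$, it sends $\mathbf 0$ to $\mathbf 0$ and pulls the coordinate functions on $D^1(f)=\CC^n$ back to $x_1,\ld,x_n$; hence $Q(\pi^2_1)=\mathcal{O}_{D^2(f),0}/(\pi^2_1)^*\mathfrak m_{\CC^n,0}=A/(\mathfrak a+\mathcal{I}_2)$.

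The second step is the reduction modulo $\mathfrak a$. There is a canonical isomorphism $A/\mathfrak a\cong\mathcal{O}_{\CC^n,0}$ in the variable $x'$, under which the generator $f_j(x)-f_j(x')$ of $K$ is sent to $f_j(\mathbf 0)-f_j(x')=-f_j(x')$, because $f_j$ vanishes at the origin; so the image of $K$ is precisely $f^*\mathfrak m_{\CC^p,0}$, and therefore $A/(\mathfrak a+K)\cong Q(f)$. Since $\textnormal{Fitt}_0(I_{\Delta_n}/K)$ is generated by explicit elements of $A$ (minors of a presentation matrix of $I_{\Delta_n}/K$), its image in $A/(\mathfrak a+K)$ is the ideal $J$ generated by those images, and we conclude $Q(\pi^2_1)\cong Q(f)/J$. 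Taking $\CC$-dimensions completes the proof.

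I expect the only point needing care to be the identification $Q(\pi^2_1)=A/(\mathfrak a+\mathcal{I}_2)$, i.e.\ that forming $D^2(f)$ by means of a Fitting ideal and then restricting to the fibre of $\pi^2_1$ may be carried out in either order. This causes no trouble precisely because a Fitting ideal is generated by genuine determinants, so reduction modulo $\mathfrak a$ only imposes extra relations on $Q(f)$ and never enlarges its ambient ring. Finally I would note that the argument uses only the description \eqref{d2} of $D^2(f)$ and hence is valid for any corank, but it is of interest only when $\textnormal{corank}(f)\ge 2$, since for corank $1$ Proposition \ref{propqf} gives the sharper equality $q(\pi^2_1)=q(f)-1$.
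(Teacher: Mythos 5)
Your proof is correct and follows essentially the same route as the paper: both compute $Q(\pi^2_1)$ as the quotient of $\mathcal{O}_{\CC^n\times\CC^n,0}$ by the ideal \eqref{d2} together with the first-factor coordinates, observe that modulo those coordinates the differences $f_j(\textbf{x}_1)-f_j(\textbf{x}_2)$ become $f_j(\textbf{x}_2)$, and conclude that $Q(\pi^2_1)$ is a quotient of $Q(f)$ by the (restricted) Fitting ideal. Your write-up merely spells out in more detail the identification of the fibre algebra and the behaviour of the Fitting ideal under the specialisation $\textbf{x}_1=0$, which the paper leaves implicit.
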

\begin{proof} Consider \eqref{d2}. Denoting the Fitting ideal in the denominator simply by $\text{Fitt}_0$, we have
\begin{eqnarray*} Q(\pi^2_1) &\cong & \frac{\mathcal{O}_{\mathbb{C}^{n}\times \mathbb{C}^{n},0}}{(f_1(\textbf{x}_1)-f_1(\textbf{x}_2),\ldots, f_p(\textbf{x}_1)-f_p(\textbf{x}_2))+\textnormal{Fitt}_0+(\textbf{x}_1)}
\cr
&\cong&  \frac{\mathcal{O}_{\mathbb{C}^{n},0}}{(f_1(\textbf{x}_2),\ldots,f_p(\textbf{x}_2))+\textnormal{Fitt}_0|_{\textbf{x}_1=0}}
\end{eqnarray*} This is a quotient of $Q(f)$.  
\end{proof}

\begin{example}\label{ex1} For the stable map-germ 
$$h\colon (\textbf{u},y,z)\mapsto (\textbf{u},y^2+u_1z,z^2+u_2y,u_3yz+u_4y+u_5z),$$
we have $Q(h)=\mathbb{C}\cdot \{1,y,z,yz\}$ and $Q(\pi^2_1(h))=\mathbb{C}\cdot \{1,y_2,z_2\}$.
\end{example}

\begin{example}\label{ex2} For the stable map-germ
$$\tilde{h}\colon (\textbf{u},y,z)
\mapsto(\textbf{u}, y^{3}+u_1z+{u}_{2} y, yz+{u}_{3} z,
z^2+{u}_{4} y+{u}_{5} y^{2}+{u}_{6} z)$$ 
we find $Q(\tilde{h})=\mathbb{C}\cdot \{1,y,y^2,z\}$ and $Q(\pi^2_1(\tilde{h}))=\mathbb{C}\cdot \{1,y_2,y_2^2,z_2\}$.
\end{example}

\subsection{Pull-backs of map-germs}

By Mather's results (\cite{matherVI}), if $f\in\mathcal{E}_{n,p}^0$ is finite then there exists a stable germ $F\colon (\mathbb{C}^{N},0) \rightarrow (\mathbb{C}^{P},0)$ and a germ of immersion $g\colon (\mathbb{C}^p,0)\rightarrow (\mathbb{C}^{P},0)$ with $g$ transverse to $F$, such that $f$ is obtained as a fibre product
\begin{equation*} \xymatrix{ (\mathbb{C}^{N},0) \ar[r]^F & (\mathbb{C}^{P},0) \\ (\mathbb{C}^{n},0)\cong (\mathbb{C}^N\times_{\mathbb{C}^P}\mathbb{C}^p,0)
\ar[u]^i \ar[r]^>>>>>f & (\mathbb{C}^{p},0) \ar[u]^g}\end{equation*} and $P-N=p-n$. Naturally, $\mathcal{O}_{\mathbb{C}^N\times_{\mathbb{C}^P} \mathbb{C}^p,(0,0)}\cong \mathcal{O}_{\mathbb{C}^N,0}\hat{\otimes}_{\mathcal{O}_{\mathbb{C}^P,0}} \mathcal{O}_{\mathbb{C}^p,0}$. In fact, we have
 $\mathcal{O}_{\mathbb{C}^N\times_{\mathbb{C}^P} \mathbb{C}^p,(0,0)}\cong \mathcal{O}_{\mathbb{C}^N,0}\otimes_{\mathcal{O}_{\mathbb{C}^P,0}} \mathcal{O}_{\mathbb{C}^p,0}$ since $g$ is finite (\cite[Lemma 1.89]{greuel-l-s}).

We note that
\begin{equation*}
\textnormal{Fitt}_j(f_*\mathcal{O}_{\mathbb{C}^n,0}) = g^*\textnormal{Fitt}_j(F_*\mathcal{O}_{\mathbb{C}^N,0})\end{equation*}
since Fitting ideals commute with base change (\cite[\S 1]{teissier}). Indeed we have
\begin{prop}\label{glambda} Suppose $X$ is a germ of dimension $n$. Let  $f\colon X\to (\CC^{n+1},0)$ be a pull-back of a finite $F\in\mathcal{E}_{N,N+1}^0$ by $g\in \mathcal{E}_{n+1,N+1}^0$. 
If \begin{equation*}0\rightarrow \mathcal{O}_{\mathbb{C}^{N+1},0}^{r+1} \xrightarrow{\Lambda} \mathcal{O}_{\mathbb{C}^{N+1},0}^{r+1} \rightarrow \mathcal{O}_{\mathbb{C}^{N},0} \rightarrow 0 \end{equation*} is minimal so is
$$0\rightarrow \mathcal{O}_{\mathbb{C}^{n+1},0}^{r+1} \xrightarrow{g^*\Lambda} \mathcal{O}_{\mathbb{C}^{n+1},0}^{r+1} \rightarrow \mathcal{O}_{X} \rightarrow 0.$$
\end{prop}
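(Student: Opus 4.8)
The plan is to reduce the statement to the fact that Fitting-ideal presentations, and indeed minimal free resolutions of length one of a module presented by a square matrix, behave well under the flat base change that is implicit in the pull-back construction. First I would recall the setup: since $f\colon X\to(\CC^{n+1},0)$ is the pull-back of $F$ by $g$, we have $\OO_X\cong\OO_{\CC^{N},0}\otimes_{\OO_{\CC^{N+1},0}}\OO_{\CC^{n+1},0}$, and $g$ is an immersion transverse to $F$, so $g^*\colon\OO_{\CC^{N+1},0}\to\OO_{\CC^{n+1},0}$ realises $\OO_{\CC^{n+1},0}$ as $\OO_{\CC^{N+1},0}/(h_1,\ldots,h_{N-n})$ where $h_1,\ldots,h_{N-n}$ is a regular sequence (the defining equations of the image of $g$). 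Applying $-\otimes_{\OO_{\CC^{N+1},0}}\OO_{\CC^{n+1},0}$ to the given resolution $0\to\OO_{\CC^{N+1},0}^{r+1}\xrightarrow{\Lambda}\OO_{\CC^{N+1},0}^{r+1}\to\OO_{\CC^{N},0}\to 0$ immediately yields a complex
\begin{equation*}
\OO_{\CC^{n+1},0}^{r+1}\xrightarrow{g^*\Lambda}\OO_{\CC^{n+1},0}^{r+1}\to\OO_X\to 0,
\end{equation*}
which is right-exact since tensoring is right-exact and the cokernel of $\Lambda$ is $\OO_{\CC^N,0}$, whose base change is $\OO_X$. So the only thing to check is exactness on the left, i.e.\ that $g^*\Lambda$ is injective, and that the resolution so obtained is minimal.

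For injectivity the key point is that $\det\Lambda$ is a non-zero-divisor in $\OO_{\CC^{N+1},0}$ — this follows because $\Lambda$ presents $\OO_{\CC^N,0}$, a module of rank $0$ supported on a hypersurface, or more directly from the exactness of the original length-one resolution together with the Buchsbaum--Eisenbud acyclicity criterion. Then I would argue that $g^*(\det\Lambda)=\det(g^*\Lambda)$ remains a non-zero-divisor in $\OO_{\CC^{n+1},0}$: since $g$ is transverse to $F$, the restriction of the critical locus $V(\det\Lambda)=M_2(F)$-type hypersurface to the image of $g$ still has the expected codimension $1$ in $(\CC^{n+1},0)$ — here one uses that $X$ has dimension $n$, so $f$ is finite and generically one-to-one onto a hypersurface, forcing $g^*\det\Lambda\neq 0$, and $\OO_{\CC^{n+1},0}$ being a domain makes any non-zero element a non-zero-divisor. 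A square matrix over a Noetherian local domain with non-zero determinant is injective, giving exactness on the left. Minimality is then automatic: the entries of $g^*\Lambda$ are $g^*$ of the entries of $\Lambda$, all of which lie in $\mathfrak{m}_{\CC^{N+1},0}$ by minimality of the original resolution, and $g^*\mathfrak{m}_{\CC^{N+1},0}\subseteq\mathfrak{m}_{\CC^{n+1},0}$, so every entry of $g^*\Lambda$ lies in the maximal ideal, which is exactly the minimality condition for a resolution by free modules with a single differential.

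The step I expect to be the main obstacle is showing that $g^*\det\Lambda$ is a non-zero-divisor — equivalently, that pulling back by $g$ does not kill $\det\Lambda$ or introduce embedded components. The cleanest route is probably to use the hypothesis that $X$ has dimension exactly $n$ together with finiteness of $f$: this forces $f$ to have a $1$-dimensional image-theoretic "fibre" structure making $\OO_X$ a torsion $\OO_{\CC^{n+1},0}$-module of the right codimension, so its zeroth Fitting ideal $(\det g^*\Lambda)$ defines a hypersurface and in particular is non-zero. Alternatively, since Fitting ideals commute with base change we have $(\det g^*\Lambda)=\mathrm{Fitt}_0(f_*\OO_X)=g^*\mathrm{Fitt}_0(F_*\OO_{\CC^N,0})$, and transversality of $g$ to $F$ (hence to the hypersurface $M_2(F)$ on which $F_*\OO_{\CC^N}$ fails to be locally free of minimal rank, and more to the point to all the $M_k(F)$) guarantees this pulls back to a proper, non-unit, non-zero ideal. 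I would present the argument via this Fitting-ideal/transversality observation, as it dovetails with the base-change remark made just before the proposition and avoids re-deriving acyclicity criteria; the remaining verifications (right-exactness, the maximal-ideal condition) are routine.
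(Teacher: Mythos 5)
Your proposal is correct and follows essentially the paper's own (very short) argument: right-exactness of the tensor product gives the presentation, $\det(g^*\Lambda)=g^*(\det\Lambda)$ cannot vanish identically because that would mean $\textnormal{im}(g)\subseteq \textnormal{im}(F)=V(\det\Lambda)$, forcing the fibre product $X$ to have dimension $n+1$ rather than $n$, and then injectivity of the square matrix over the domain $\mathcal{O}_{\mathbb{C}^{n+1},0}$ and the maximal-ideal criterion for minimality are routine. One small cleanup: $V(\det\Lambda)$ is the image $M_1(F)$, not an ``$M_2(F)$-type'' locus, and neither transversality of $g$ to $F$, nor $g$ being an immersion, nor generic injectivity of $f$ is assumed or needed -- the hypothesis $\dim X=n$ alone does the work.
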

\begin{proof} This is an easy consequence of the right exactness of tensor products and the fact that
 $\det (g^*\Lambda)$ is identically $0$ only if $\textnormal{im}(g)\subseteq \textnormal{im}(F)$, which contradicts the assumption on the dimension of $X$.
 \end{proof}

\subsection{Principle of iteration}

The setup of multiple point spaces allows us to define certain \textit{iterations} involving the projections 
$\pi^k_{k-1}$ and the map-germ itself. First, there is a natural bijection 
\beqa \phi\colon D^s(\pi^k_{k-1}(f)) & \rightarrow & D^{k+s-1}(f) \cr
\left((\textbf{x}_1^1,\ldots,\textbf{x}_k^1),\ldots,(\textbf{x}_1^s,\ldots,\textbf{x}_k^s)\right) &\mapsto  & \left((\textbf{x}_1^1,\ldots,\textbf{x}_{k-1}^1,\textbf{x}_k^1),\ldots,(\textbf{x}_1^1,\ldots,\textbf{x}_{k-1}^1,\textbf{x}_k^s)\right) \nonumber
\eeqa
with $\textbf{x}^i_j\in \mathbb{C}^n$ for all $i,j$ (\cite[Remark 2.7 (iii)]{goryunov-mond}).
Recall that if $f$ is stable of corank 1 then $D^k(f)$ is smooth (or empty) for all $k$. Moreover,  $\pi^k_{k-1}$ is also stable and of corank 1 for all $k$ (\cite{goryunov}). So, $\phi$ induces
\beq \label{iprince} \mathcal{O}_{D^s(\pi^k_{k-1}(f))}\cong \mathcal{O}_{D^{k+s-1}(f)}.
\eeq

The isomorphism (\ref{iprince}) can be extended to finite map-germs of corank 1. First we note,

\begin{lem}\label{punfold} Let  $f\in \mathcal{E}_{n,p}^0$ be a finite map-germ of corank 1. Let $F\in \mathcal{E}_{n+d,p+d}^0$  be a parametrised unfolding of $f$. Then $\pi^k_{k-1}(F)$ is an unfolding of $\pi^k_{k-1}(f)$.
\end{lem}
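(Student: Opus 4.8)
The plan is to reduce the statement to the defining equations of the multiple point spaces. Write $f$ in the normal form \eqref{lac}, $f(\mathbf{x},y)=(\mathbf{x},f_n,\ldots,f_p)$, and write the unfolding as $F(\mathbf{x},y,\mathbf{u})=(\mathbf{x},F_n,\ldots,F_p,\mathbf{u})$ with $F_j|_{\mathbf{u}=0}=f_j$; since $F$ is a parametrised unfolding it again has corank $1$ and is already in the form \eqref{lac} with respect to the coordinates $(\mathbf{x},\mathbf{u},y)$ on the source and $(\mathbf{x},\mathbf{u})$ as the "$x$-variables" in the target. So both $D^k(f)$ and $D^k(F)$ are defined by the iterated divided-difference ideals $\mathcal{I}_k(f)$ and $\mathcal{I}_k(F)$ of \eqref{idk}, living in $(\mathbb{C}^{n-1}\times\mathbb{C}^k,0)$ and $(\mathbb{C}^{n-1+d}\times\mathbb{C}^k,0)$ respectively.

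The key observation is that the divided differences $R_i^j$ are functorial in the obvious way under adding unfolding parameters: the functions $R_i^j(F)(\mathbf{x},\mathbf{u},y_1,\ldots,y_{i+1})$ obtained from $F_j$ restrict, on setting $\mathbf{u}=0$, exactly to $R_i^j(f)(\mathbf{x},y_1,\ldots,y_{i+1})$, because the construction \eqref{idk} only involves the $y$-variables and is compatible with specialising the parameters. Hence $\mathcal{I}_k(F)|_{\mathbf{u}=0}=\mathcal{I}_k(f)$ and consequently $D^k(F)\cap\{\mathbf{u}=0\}=D^k(f)$ as schemes (not just as sets). Next I would identify the projection: the map $\pi^k_{k-1}(F)\colon D^k(F)\to D^{k-1}(F)$ is the forgetting-the-last-point map, which in coordinates sends $(\mathbf{x},\mathbf{u},y_1,\ldots,y_k)\mapsto(\mathbf{x},\mathbf{u},y_1,\ldots,y_{k-1})$; it visibly carries the parameters $\mathbf{u}$ along unchanged. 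Restricting everything to $\mathbf{u}=0$ recovers $\pi^k_{k-1}(f)$. Thus $\pi^k_{k-1}(F)$, together with the projection $D^{k-1}(F)\to(\mathbb{C}^d,0)$ (reading off $\mathbf{u}$) and the inclusion of the fibre over $0$, presents $\pi^k_{k-1}(F)$ as a deformation of $\pi^k_{k-1}(f)$; the last thing to check is that this deformation is an \emph{unfolding}, i.e.\ that the family $D^k(F)\to(\mathbb{C}^d,0)$ is flat with special fibre $D^k(f)$ and that the parameters appear trivially, which is exactly the fibre-square compatibility \eqref{fs} combined with the fact that $\pi^k_{k-1}(F)$ restricts to $\pi^k_{k-1}(f)$ on the central fibre.

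The one genuine subtlety — and the step I expect to be the main obstacle — is the interaction with the scheme structures and the $0$-dimensional exceptional case. When $D^k(f)$ is positive-dimensional the identification via $\mathcal{I}_k$ is clean as above. But Definition~\ref{multiple} decrees that $D^k$ of a non-stable germ is \emph{defined} by intersecting $D^k$ of a stable unfolding with $\{u_1=\cdots=u_k=0\}$ and taking the reduced structure, precisely to handle the case $\dim D^k(f)=0$. So to be safe I would run the argument through a stable unfolding: choose a stable unfolding $G$ of $F$ (hence also of $f$), note $D^k(G)$ is smooth, and use that both $D^k(F)$ and $D^k(f)$ are slices of $D^k(G)$ by coordinate subspaces, with the $F$-slice cut out by fewer equations than the $f$-slice (the difference being exactly $\{u_1^{(F)}=\cdots=u_k^{(F)}=0\}$, the new parameters coming from $F$). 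The compatibility of $\pi^k_{k-1}$ with these slicings is then immediate from its coordinate description on $D^k(G)$, and one reads off that $\pi^k_{k-1}(F)$ restricted to the locus where the $F$-parameters vanish is $\pi^k_{k-1}(f)$, with the remaining parameters free — which is the definition of an unfolding. Flatness over the parameter space is inherited from the smoothness of $D^k(G)$ and the transversality built into the stable unfolding. This packaging also makes the independence-of-choices point automatic, since any two stable unfoldings of $F$ are related by the standard uniqueness of stable unfoldings.
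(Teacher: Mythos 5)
Your argument is correct and follows essentially the same route as the paper: the entire content is the identification $\mathcal{O}_{D^k(f)}\cong\mathcal{O}_{D^k(F)}/\mathfrak{m}_{\mathbb{C}^d,0}$ (which the paper simply invokes as the definitional compatibility \eqref{fs}, and which you re-derive from the divided-difference ideals and, in the zero-dimensional case, from Definition \ref{multiple} via a common stable unfolding), combined with the observation that the forgetful projection carries the parameters $\mathbf{u}$ along unchanged and restricts to $\pi^k_{k-1}(f)$ over $\mathbf{u}=0$. The only cosmetic point is that a general parametrised unfolding need not literally be in the form \eqref{lac} in the coordinates $(\mathbf{x},\mathbf{u},y)$; one first performs a parameter-dependent coordinate change, trivial at $\mathbf{u}=0$, to linearly adapt $F$ -- the same tacit normalisation the paper makes when it applies $\mathcal{I}_k$ to unfoldings.
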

\begin{proof} By definition, \begin{equation*}
\mathcal{O}_{D^k(f)}\cong \mathcal{O}_{D^k(F)}/\mathfrak{m}_{\mathbb{C}^d,0}.\end{equation*} 
Moreover, an unfolding is a level-preserving map-germ whose restriction to the zero fibre is the original map-germ (\cite[Section 1]{mond-montaldi}). The result is now straightforward.
\end{proof}

\begin{prop} \label{finprince} Let  $f\in \mathcal{E}_{n,p}^0$ be a finite map-germ of corank 1. Then $\mathcal{O}_{D^s(\pi^k_{k-1})}\cong \mathcal{O}_{D^{k+s-1}(f)}$.
\end{prop}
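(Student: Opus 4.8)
The plan is to combine the stable case isomorphism \eqref{iprince} with the ``fibre square'' compatibility of $D^k(-)$ with unfolding stated after Definition~\ref{multiple}, using Lemma~\ref{punfold} to keep track of the iteration. First I would choose a stable unfolding
$F\colon(\CC^n\times\CC^d,0)\to(\CC^p\times\CC^d,0)$ of $f$, with unfolding parameters $\mathbf{u}=(u_1,\ldots,u_d)$. Since $F$ is stable of corank $1$, the isomorphism \eqref{iprince} applies to $F$, giving $\mathcal{O}_{D^s(\pi^k_{k-1}(F))}\cong\mathcal{O}_{D^{k+s-1}(F)}$ as algebras. By Lemma~\ref{punfold}, $\pi^k_{k-1}(F)$ is a parametrised unfolding of $\pi^k_{k-1}(f)$, over the same parameter space $\CC^d$; note that its source is $D^k(F)$, which by Definition~\ref{multiple} is itself an unfolding of $D^k(f)$, so the parameters $u_1,\ldots,u_k$ of $F$ become, after the bijection $\phi$ and the identification of $D^k(F)$ as the total space, precisely the first $k$ of the parameters controlling the multiple point spaces of $\pi^k_{k-1}(f)$.

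Next I would restrict both sides of $\mathcal{O}_{D^s(\pi^k_{k-1}(F))}\cong\mathcal{O}_{D^{k+s-1}(F)}$ to the appropriate zero fibre. On the right, Definition~\ref{multiple} (applied to $f$ with the stable unfolding $F$) gives $D^{k+s-1}(f)=D^{k+s-1}(F)\cap\{u_1=\cdots=u_{k+s-1}=0\}$, so $\mathcal{O}_{D^{k+s-1}(f)}\cong\mathcal{O}_{D^{k+s-1}(F)}/(u_1,\ldots,u_{k+s-1})$. On the left, I apply Definition~\ref{multiple} to the corank~$1$ germ $\pi^k_{k-1}(f)$ with its stable unfolding $\pi^k_{k-1}(F)$: this yields $D^s(\pi^k_{k-1}(f))=D^s(\pi^k_{k-1}(F))\cap\{u_1=\cdots=u_s=0\}$ in terms of $s$ of the unfolding parameters. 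The key bookkeeping point is that under $\phi$ the bijection of multiple point sets identifies the first $s$ parameters relevant to $D^s(\pi^k_{k-1}(F))$ with the parameters $u_k,\ldots,u_{k+s-1}$ (or an equivalent renaming) of $F$, so that, after also using that $D^k(F)$ is cut out inside its ambient space by setting $u_1=\cdots=u_k=0$ to recover $D^k(f)$ as the source, the total set of parameters killed on the left matches $u_1,\ldots,u_{k+s-1}$ on the right. Hence the isomorphism \eqref{iprince} for $F$ descends, and we obtain $\mathcal{O}_{D^s(\pi^k_{k-1}(f))}\cong\mathcal{O}_{D^{k+s-1}(f)}$.

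The main obstacle I anticipate is precisely this last bookkeeping step: making rigorous the claim that the parameters in Definition~\ref{multiple} for the two germs $f$ and $\pi^k_{k-1}(f)$ are compatible under the iteration bijection $\phi$, i.e.\ that $\phi$ is compatible with unfolding in the strong sense needed. Concretely, one must check that $\phi$ extends to a bijection $D^s(\pi^k_{k-1}(F))\to D^{k+s-1}(F)$ which is the identity on the base $\CC^d$ (up to the obvious relabelling of parameters), and that it restricts correctly on the relevant coordinate subspaces; this should follow from the explicit form of $\phi$ together with the fibre-square property \eqref{fs}, but it requires care because the source of $\pi^k_{k-1}(F)$ is the non-smooth space $D^k(F)$ rather than an affine space, so one cannot blindly invoke Definition~\ref{multiple} as stated for map-germs between smooth germs. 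I would handle this either by noting that $\pi^k_{k-1}(F)$ is still a finite corank~$1$ map-germ in the generalized sense for which Definition~\ref{multiple} and Lemma~\ref{punfold} were set up, or, more cleanly, by proving the isomorphism at the level of sets first (where $\phi$ already does the work) and then upgrading to an isomorphism of analytic structures using that both sides are reduced when all the $D^k$'s in sight have the expected dimension, and using compatibility with unfolding to pass to the general case. Once the parameter identification is in place, the rest is a routine quotient computation.
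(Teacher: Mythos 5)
Your plan is essentially the paper's proof: take a stable parametrised unfolding $F$ of $f$, use Lemma \ref{punfold} together with compatibility with unfolding to write $\mathcal{O}_{D^s(\pi^k_{k-1}(f))}\cong\mathcal{O}_{D^s(\pi^k_{k-1}(F))}/\mathfrak{m}_{\CC^d,0}$, apply \eqref{iprince} to the stable germ $F$, and restrict to the zero fibre to land on $\mathcal{O}_{D^{k+s-1}(F)}/\mathfrak{m}_{\CC^d,0}\cong\mathcal{O}_{D^{k+s-1}(f)}$. The parameter bookkeeping you flag as the main obstacle is in fact vacuous: since $F$ is an unfolding, every tuple in any $D^m(F)$ carries a single common value of the $d$-dimensional parameter $\mathbf{u}$, the iteration bijection $\phi$ visibly preserves it, so both sides are simply quotiented by all of $\mathfrak{m}_{\CC^d,0}$, which is exactly how the paper phrases it.
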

\begin{proof}  Let $F\in \mathcal{E}_{n+d,p+d}^0$  be a parametrised unfolding of $f$. Combine the result of Lemma \ref{punfold} with (\ref{iprince}) to get
\begin{equation*} \mathcal{O}_{D^s(\pi^k_{k-1}(f))}\cong \mathcal{O}_{D^s(\pi^k_{k-1}(F))}/\mathfrak{m}_{\mathbb{C}^d,0}\cong \mathcal{O}_{D^{k+s-1}(f)}.\end{equation*}
\end{proof}

\section{Presenting \texorpdfstring{$\OO_{D^{k+1}(f)}$}{O{D(k+1)}} over \texorpdfstring{$\OO_{D^k(f)}$}{OD(k)}.}
Throughout the first part of this section we suppose that 
$f:(\CC^n,0)\to(\CC^{n+1},0)$ is stable of corank 1 and multiplicity $r+1$. 
Writing $f$ with respect to  
linearly adapted coordinates, for each $k$ we 
embed $D^k(f)$ into $\CC^{n-1}\times \CC^k$, as explained in 
Subsection \ref{section-multi}.  On $\CC^{n-1}\times\CC^k$ we take coordinates 
$x_1,\ld,x_{n-1}, y_1,\ld, y_k$, or simply $\mathbf{x},y_1,\ld,y_k$.
Finally, we assume given 
a symmetric matrix $\Lambda=(\lambda_{ij})$ 
presenting $\OO_{\CC^n}$ over 
$\OO_{\CC^{n+1}}$ with respect to generators 
$1,y,\ld,y^r$.  

Now let $\Lambda^k_k$ be the matrix $\Lambda$ with its first $k$ rows and columns deleted, and let $\Lambda^k$ be the matrix $\Lambda$ with its first $k$ rows, but not columns,  deleted.
We will see that for each $k$, $\Lambda^k_k$ is the matrix of a presentation of 
$\OO_{D^{k+1}(f)}$ over $\OO_{D^k(f)}$, with respect to generators obtained from 
$1,y,\ld,y^r$ by a procedure involving iterated interpolation and division. 
Before giving the proof for all $k$, we show it in the case $k=1$. 

From each relation 
\beq\label{1}\lambda^0_i(f(\mathbf{x},y))+\lambda^1_i(f(\mathbf{x},y))y+\cd+\lambda^r_i(f(\mathbf{x},y))y^r=0\eeq
we obtain
\beq\label{6} \lambda^0_i(f(\mathbf{x},y_2))+\lambda_i^1(f(\mathbf{x},y_2))y_2+\cd+\lambda_i^r(f(\mathbf{x},y_2))y_2^r=0\eeq
and also, if $(\mathbf{x},y_1,y_2)\in D^2(f)$, so that $f(\mathbf{x},y_1)=f(\mathbf{x},y_2)$, 
\beq\label{7} \lambda^0_i(f(\mathbf{x},y_1))+\lambda_i^1(f(\mathbf{x},y_1))y_2+\cd+\lambda_i^r(f(\mathbf{x},y_1))y_2^r=0.\eeq
Subtracting \eqref{7} from \eqref{6}, we get
$$ \lambda_i^1(f(\mathbf{x},y_1))(y_1-y_2)+\lambda_i^2(f(\mathbf{x},y_1))(y_1^2-y_2^2)+\cd+\lambda_i^r(\mathbf{x},y_1))(y_1^r-y_2^r)=0.$$
Dividing by $y_1-y_2$, which is not a zero-divisor on $D^2(f)$
since the intersection of $D^2(f)$ with $\{y_1=y_2\}$ is the non-immersive locus, which has dimension $n-2$, less than the dimension of $D^2(f)$, we get
\beq\label{8}
\lambda_i^1(f(\mathbf{x},y_1))+\lambda_i^2(f(\mathbf{x},y_1))(y_1+y_2)+\cd+\lambda_i^r(f(\mathbf{x},y_1))(y_1^{r-1}+\cd+y_2^{r-1})=0.\eeq
Thus the columns of $f^*\Lambda^1$, are relations among the set
\beq\label{9}1,y_1+y_2,  y_1^2+y_1y_2+y_2^2, \ld, y_1^{r-1}+\cd+y_2^{r-1}\eeq
of elements of $\OO_{D^2(f)}$. These generate $\OO_{D^2(f)}$ over $\OO_{\CC^n}$, minimally, by \ref{propqf}. In fact the relation given by  the first column of $f^*\Lambda^1$ is a linear combination over $\OO_{\CC^n}$ of the rest: by the symmetry of $f^*\Lambda$, from 
$(1,y, \ld,y^r)(\Lambda\circ f) =0$, 
$(\Lambda\circ f)(1,y,\ld,y^r)^t=0$
and therefore
\beq\label{k2}(\Lambda\circ f\circ \pi^2_1)(1,y,\ld,y^r)^t\circ\pi^2_1=0.\eeq 
This last argument is not needed in the proof which now follows, but perhaps helps to explain the result. 
\begin{lem}\label{fp} $f^*\Lambda^1_1$ is a presentation of $\OO_{D^2(f)}$ over $\OO_{\CC^n}$. 
\end{lem}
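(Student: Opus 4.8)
The plan is to turn the computation preceding the statement into an honest exactness statement. We already have the right half: the elements $1,y_1+y_2,\ld,y_1^{r-1}+\cd+y_2^{r-1}$ of $\OO_{D^2(f),0}$ minimally generate it over $\OO_{\CC^n,0}$ by \ref{propqf}, and the columns of $f^*\Lambda^1_1$ (these are relations \eqref{8} for $i=1,\ld,r$) map to $0$ under the generating map; so we have a surjection $\OO_{\CC^n}^r\to\OO_{D^2(f)}$ with the image of $f^*\Lambda^1_1$ contained in its kernel. What must be shown is that $f^*\Lambda^1_1$ is injective and its image is \emph{all} of that kernel. First I would note that $\OO_{D^2(f),0}$, as a module over $\OO_{\CC^n,0}$ via $\pi^2_1$, is Cohen--Macaulay of dimension $n-1$: since $f$ is stable of corank $1$ in these dimensions, $D^2(f)$ is smooth of dimension $n-1$ by \cite{marar-mond}, and $\pi^2_1$ is finite, so depth and dimension over $\OO_{\CC^n,0}$ both equal $n-1$. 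By Auslander--Buchsbaum its projective dimension over $\OO_{\CC^n,0}$ is $1$; localising at $(0)$ (where $\OO_{D^2(f)}$ has rank $0$, being supported on the hypersurface $D^2_1(f)$) a minimal resolution is square, $0\to\OO_{\CC^n}^r\xrightarrow{A}\OO_{\CC^n}^r\to\OO_{D^2(f)}\to 0$ with $r=q(\pi^2_1)$. As the chosen generators are minimal, the kernel of the generating map is exactly $\operatorname{im}A$, so the columns of $f^*\Lambda^1_1$ lie in $\operatorname{im}A$ and $f^*\Lambda^1_1=AC$ for some $r\times r$ matrix $C$ over $\OO_{\CC^n,0}$. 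It then suffices to prove $\det C$ is a unit, and since $\det A$ is a non-zero-divisor this is equivalent to the Fitting-ideal identity $(\det f^*\Lambda^1_1)=(\det A)=\operatorname{Fitt}_0^{\OO_{\CC^n}}(\OO_{D^2(f)})$.

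To prove that identity I would compute $\operatorname{Fitt}_0^{\OO_{\CC^n}}(\OO_{D^2(f)})$ in two ways. On the determinantal side: from the relation $(1,y,\ld,y^r)\,\Lambda\circ f=0$ and the symmetry of $\Lambda$ we get $(f^*\Lambda)(1,y,\ld,y^r)^t=0$, so column $0$ of $f^*\Lambda$ equals $-\sum_{i\ge 1}y^i\,(\textnormal{column }i)$, and by symmetry the same holds for rows. A short column/row manipulation with these dependences shows that every $r\times r$ minor of $f^*\Lambda$ is $\pm y^i$ times $\det f^*\Lambda^1_1$, so $I_r(f^*\Lambda)=(\det f^*\Lambda^1_1)$. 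On the module side: Fitting ideals commute with base change, so $I_r(f^*\Lambda)=f^*\operatorname{Fitt}_1(f_*\OO_{\CC^n})=\operatorname{Fitt}_1\!\bigl(\OO_{\CC^n}\otimes_{\OO_{\CC^{n+1}}}\OO_{\CC^n}\bigr)$; the multiplication map $\mu$ splits a free summand $\OO_{\CC^n}$ off $\OO_{\CC^n}\otimes_{\OO_{\CC^{n+1}}}\OO_{\CC^n}$, so by the product formula for Fitting ideals of a direct sum this equals $\operatorname{Fitt}_0(\ker\mu)$; and $\ker\mu\cong\OO_{D^2(f)}$ as $\OO_{\CC^n}$-modules, which in the corank $1$ case is immediate from Weierstrass division (both sides being $\OO_{\CC^n}\{y_2\}/\mathcal I_2(f)$ after removing the factor $y_2-y_1$). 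Hence $\operatorname{Fitt}_0^{\OO_{\CC^n}}(\OO_{D^2(f)})=I_r(f^*\Lambda)=(\det f^*\Lambda^1_1)$.

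Assembling, $\det f^*\Lambda^1_1=\det A\cdot\det C$ together with $(\det f^*\Lambda^1_1)=(\det A)$ forces $\det C$ to be a unit, so $C$ is invertible; then $f^*\Lambda^1_1=AC$ is injective with image $\operatorname{im}A=\ker\bigl(\OO_{\CC^n}^r\to\OO_{D^2(f)}\bigr)$, which is exactly the claim that $f^*\Lambda^1_1$ presents $\OO_{D^2(f)}$ over $\OO_{\CC^n}$. The main obstacle is the Fitting-ideal identity $(\det f^*\Lambda^1_1)=\operatorname{Fitt}_0^{\OO_{\CC^n}}(\OO_{D^2(f)})$: the inclusion $\subseteq$ is formal from the surjection $\operatorname{coker}(f^*\Lambda^1_1)\twoheadrightarrow\OO_{D^2(f)}$, but the reverse inclusion amounts to controlling the multiplicity of $\OO_{D^2(f)}$ as an $\OO_{\CC^n}$-module so that the single chosen minor of $f^*\Lambda$ is not a proper multiple of the true Fitting generator, and it is precisely here that the symmetry of $\Lambda$, the identification $\OO_{D^2(f)}\cong\ker\mu$, and hence corank $1$, are used.
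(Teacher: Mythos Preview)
Your argument is correct, and its skeleton coincides with the paper's: compare $f^*\Lambda^1_1$ with a square minimal presentation $A$ of $\OO_{D^2(f)}$ with respect to the generators \eqref{9}, and show that the linking matrix (your $C$, the paper's lift $\alpha_1$) has unit determinant by proving that $\det f^*\Lambda^1_1$ and $\det A$ generate the same principal ideal of $\OO_{\CC^n,0}$.

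Where you diverge is in the identification of that ideal. The paper shows both determinants generate the \emph{conductor} of $\OO_{\CC^n}$ in $\OO_{f(\CC^n)}$: for $\det f^*\Lambda^1_1$ this is quoted from \cite[Theorem 3.4 and Lemma 3.3]{mond-pellikaan}; for $\det A$, it defines the image $\pi^2_1(D^2(f))=D^2_1(f)$, which is the non-normal locus with reduced structure because $\pi^2_1$ is generically one-to-one. You instead route through $\ker\mu$: your symmetry computation gives $(\det f^*\Lambda^1_1)=I_r(f^*\Lambda)=f^*\textnormal{Fitt}_1(f_*\OO_{\CC^n})=\textnormal{Fitt}_0(\ker\mu)$, and then the elementary isomorphism $\OO_{D^2(f)}\xrightarrow{\ \cdot(y_1-y_2)\ }(y_1-y_2)\OO_{\ID^2(f)}=\ker\mu$ (valid in corank $1$ since the ideal of $\ID^2(f)$ in $\OO_{\CC^{n-1}\times\CC^2}$ is exactly $(y_1-y_2)\cdot\mathcal I_2(f)$) yields $\textnormal{Fitt}_0(\ker\mu)=\textnormal{Fitt}_0(\OO_{D^2(f)})=(\det A)$.

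Your route is more self-contained --- no appeal to the conductor result --- and it establishes the isomorphism $\ker\mu\cong\OO_{D^2(f)}$ at the outset, whereas the paper only recovers this afterwards (Proposition~\ref{pkerm} combined with the main theorem, with the explicit map recorded in the appendix). The paper's route, on the other hand, is shorter once the citation is granted, and the conductor viewpoint is exactly what is recycled verbatim in the induction step (Theorem~\ref{thmstable}); so it is worth having both arguments in hand.
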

\begin{proof} Consider the diagram
\beq\label{nub}
\xymatrix{0\ar[r]&\OO_{\CC^n}^r\ar[r]^{A}&\OO_{\CC^n}^r\ar[r]&\OO_{D^2(F)}\ar[r]&0\\
0\ar[r]&\OO_{\CC^n}^r\ar[r]^{f^*\Lambda^1_1}\ar[u]_{\alpha_1}&\OO_{\CC^n}^r\ar[r]\ar[u]_{\alpha_0}&\text{coker }f^*\Lambda^1_1\ar[r]\ar[u]_{\alpha}&0}
\eeq
in which 
\begin{itemize}
\item
$A$ is a presentation of $\OO_{D^2(f)}$ with respect to the generators \eqref{9}, which we may assume square, and which is certainly minimal;
\item
$\alpha$, sending the class of the $i$'th generator of $\OO_{\CC^n}^r$  to the $i$'th  member of the set \eqref{9},  is an epimorphism by virtue of the fact that the members of \eqref{9} generate $\OO_{D^2(f)}$,
\item
 $\alpha_0$ and $\alpha_1$ are successive lifts of $\alpha$,
 \end{itemize}
 Because $\alpha$ is an epimorphism, so is $\alpha_0$, and its injectivity follows. Up to multiplication by a unit, $\det A=\det f^*\Lambda^1_1$. For $\det f^*\Lambda_1^1$ generates the conductor of $\OO_{\CC^n}$
 into $\OO_{f(\CC^n)}$, by \cite[Theorem 3.4 and Lemma 3.3]{mond-pellikaan}; on the other hand, $\det\,A$ generates the conductor because it defines the image of $\pi_1^2$, with reduced structure since 
 $\pi^2_1$ is generically one-to-one. Now since $\det\alpha_0$ is a unit, it follows by commutativity of the diagram 
 that $\det \alpha_1$ is also
 a unit. Thus $\alpha$ is an isomorphism and $f^*\Lambda_1^1$ is a presentation of $\OO_{D^2(f)}$. 
 \end{proof}
 We now go on to prove inductively that for each $k$ with $2\leq k\leq r$, $f^*\Lambda^k_k$ is the matrix of a presentation of $\OO_{D^{k+1}(f)}$ over $\OO_{D^k(f)}$.  At the induction step we produce a set of
 generators for $\OO_{D^{k+1}(f)}$ over $\OO_{D^k(f)}$, from the generators for $\OO_{D^k(f)}$ over $\OO_{D^{k-1}(f)}$, by the same procedure -- interpolation and division -- with which we obtained the set 
 \eqref{9}.
 \vskip 10pt
 {\bf New generators:} We apply this procedure to the set \eqref{9}: for each member $g^{(2)}_i$ except the first (where it would give 0), we form the quotient 
 \beq\label{quot}
g^{(3)}_{i-1}:=\frac{g^{(2)}_i(x,y_1,y_2)-g^{(2)}_i(x,y_1,y_3)}{y_2-y_3},\eeq
by this means obtaining the set 
 \beq\label{10} 1, y_1+y_2 +y_3, \ld, \frac{\frac{y_1^r-y_2^r}{y_1-y_2}-\frac{y_1^r-y_3^r}{y_1-y_3}}{y_2-y_3}. \eeq
 Note that the cardinality has decreased by 1.
 \begin{lem}\label{newgens} The set $g_0^{(k)},\ld,g_{r-k+1}^{(k)}$ obtained by iterating this procedure $k-1$ times, 
 is a minimal generating set for $\OO_{D^k(f)}$ over $\OO_{D^{k-1}(f)}$. Moreover, for each $i$, $g^{(k-1)}_i$ is a sum of all monomials of degree $i$ in $y_1,\ld,
 y_{k-1}$, and in particular $g^{(k-1)}_0=1$.
 \end{lem}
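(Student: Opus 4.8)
The plan is to prove Lemma \ref{newgens} by induction on $k$, tracking two things simultaneously: the combinatorial identity that $g_i^{(k-1)}$ is the complete homogeneous symmetric polynomial $h_i(y_1,\ldots,y_{k-1})$ (the sum of all degree-$i$ monomials), and the module-theoretic statement that these elements minimally generate $\OO_{D^k(f)}$ over $\OO_{D^{k-1}(f)}$. The base case $k=2$ is already established in the text: the set \eqref{9} is exactly $h_0,\ldots,h_{r-1}$ in $y_1,y_2$, and it minimally generates $\OO_{D^2(f)}$ over $\OO_{\CC^n}=\OO_{D^1(f)}$ by Proposition \ref{propqf}, since $q(\pi^2_1)=r$.

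For the combinatorial step, I would verify that the divided-difference operation \eqref{quot} sends $h_i(y_1,\ldots,y_m)$ to $h_{i-1}(y_1,\ldots,y_{m+1})$. Concretely, using the generating-function identity $\sum_{i\geq 0} h_i(y_1,\ldots,y_m)\,t^i = \prod_{j=1}^m (1-y_j t)^{-1}$, one computes
\begin{equation*}
\frac{h_i(y_1,\ldots,y_{m-1},y_m) - h_i(y_1,\ldots,y_{m-1},y_{m+1})}{y_m - y_{m+1}} = h_{i-1}(y_1,\ldots,y_{m+1}),
\end{equation*}
which follows by writing $h_i(y_1,\ldots,y_{m-1},y_j) = \sum_{a+b=i} h_a(y_1,\ldots,y_{m-1})\,y_j^{\,b}$, subtracting, and dividing by $y_m-y_{m+1}$ using $(y_m^b - y_{m+1}^b)/(y_m-y_{m+1}) = h_{b-1}(y_m,y_{m+1})$, then reassembling. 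This is the routine calculation I would not grind through in full; the point is that the procedure is exactly the classical recursion for complete homogeneous symmetric functions, so after $k-1$ iterations starting from $1,y,\ldots,y^r$ (which are $h_0(y),\ldots,h_r(y)$ in one variable) we land on $h_0(y_1,\ldots,y_{k-1}),\ldots,h_{r-k+1}(y_1,\ldots,y_{k-1})$, and $h_0=1$. This also explains why the cardinality drops by one at each step and why the first member would give $0$ under the operation.

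For the module-theoretic step, I would argue as in the $k=2$ case but one level up. By the principle of iteration (Proposition \ref{finprince}, or more directly the bijection $\phi$ and smoothness of $\pi^k_{k-1}$ in the stable case), $\OO_{D^k(f)}\cong \OO_{D^1(\pi^{k-1}_{k-2}\cdots)}$-type identifications reduce the claim to understanding $\OO_{D^k(f)}$ over $\OO_{D^{k-1}(f)}$ as the pushforward along $\pi^k_{k-1}$. By Proposition \ref{propqf}, $q(\pi^k_{k-1}) = r-k+2$, so a minimal generating set must have exactly $r-k+2$ elements — which matches the cardinality $r-(k-1)+1 = r-k+2$ of $g_0^{(k-1)},\ldots,g_{r-k+1}^{(k-1)}$. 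It then suffices to show these elements span $\OO_{D^k(f)}$ over $\OO_{D^{k-1}(f)}$; by Nakayama it is enough to show their images span the fiber $Q(\pi^k_{k-1}) = \OO_{D^k(f)}/\mathfrak{m}_{D^{k-1}(f)}\OO_{D^k(f)}$. Restricting to that fiber kills $\mathbf{x}, y_1,\ldots,y_{k-1}$, so $h_i(y_1,\ldots,y_{k-1})$ restricts to $h_i(0,\ldots,0,\bar y_k)=\bar y_k^{\,i}$, and by the computation in the proof of Proposition \ref{propqf} the fiber is $\CC[\bar y_k]/(\bar y_k^{r-k+2})$ with basis $1,\bar y_k,\ldots,\bar y_k^{r-k+1}$. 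Hence the $g_i^{(k-1)}$ descend to a $\CC$-basis of the fiber, so they generate minimally. The inductive hypothesis is used only to guarantee that the new generators are produced by applying the procedure to an honest minimal generating set of the previous stage, keeping the iteration well-defined (the division steps require the previous $g$'s to live in $\OO_{D^{k-1}(f)}$, which holds since they are polynomials).

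The main obstacle is making the divided-difference identity for complete homogeneous symmetric polynomials precise and checking that the iteration is legitimate at the level of the rings $\OO_{D^k(f)}$ rather than just polynomial rings — in particular that $y_2-y_3$ (and its analogues $y_i-y_{i+1}$ at later stages) is a non-zero-divisor on $D^k(f)$, so that the quotient \eqref{quot} makes sense as an element of $\OO_{D^{k+1}(f)}$. This is the exact analogue of the remark in the $k=2$ case that $D^2(f)\cap\{y_1=y_2\}$ is the non-immersive locus of dimension $n-2$; here one needs that $D^{k+1}(f)\cap\{y_i=y_{i+1}\}$ has dimension strictly less than $\dim D^{k+1}(f) = n-k$, which holds in the stable corank-1 setting because that intersection maps into $D^k(f)$ with smaller fibers — alternatively it follows from the fact that $D^{k+1}(f)$ is reduced and Cohen–Macaulay of the expected dimension and $y_i - y_{i+1}$ does not vanish on any component. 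Everything else is bookkeeping.
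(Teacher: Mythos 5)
Your proof is correct and follows essentially the same route as the paper: prove by induction that the iterated divided differences are the complete homogeneous symmetric polynomials, then deduce minimal generation by reducing modulo $(\mathbf{x},y_1,\ldots,y_{k-1})$, where the generators become $1,y_k,\ldots,y_k^{r-k+1}$, a basis of $Q(\pi^k_{k-1})$ by Proposition \ref{propqf}, and invoking Nakayama together with the count $q(\pi^k_{k-1})=r-k+2$. The only blemishes are a superscript slip (you write $g^{(k-1)}_i$ where you mean the generators $g^{(k)}_i=h_i(y_1,\ldots,y_k)$ of $\OO_{D^k(f)}$ over $\OO_{D^{k-1}(f)}$) and the closing worry about $y_i-y_{i+1}$ being a non-zero-divisor, which is not needed here since the divided differences of polynomials are already polynomials (it matters only for Lemma \ref{newrelns}).
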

 \begin{proof}  The second statement is easily proved by induction. The first statement follows: 
in $\OO_{D^k(f)}$ we have 
 \beqa\text{Sp}_{\CC}\{g_0^{(k)},\ld,g_{r-k+1}^{(k)}\}+(\pi^k_{k-1})^*\mathfrak{m}_{D^{k-1}(f),0}
 &=&\text{Sp}_{\CC}\{g_0^{(k)},\ld,g_{r-k+1}^{(k)}\}+\left(\mathbf{x},y_1,\ld,y_{k-1}\right)
 \cr
 &=&\text{Sp}_{\CC}\{1,y_k,\ld,y_k^{r-k+1}\}+\left(\mathbf{x},y_1,\ld,y_{k-1}\right)
 \cr
 &=&\OO_{D^k(f)}
 \eeqa
 \end{proof}
\begin{lem}\label{newrelns} Each relation 
  \beq\label{relo} \mu_0 g_0^{(k)}+\mu_1g_1^{(k)}+\cd +\mu_{r-k+1}g_{r-k+1}^{(k)}=0
  \eeq
  with the $\mu_i\in\OO_{D^{k-1}(f)}$ gives rise to a relation 
  \beq\label{reln}
  \mu_1 g_0^{(k+1)}+\cd+\mu_{r-k+1}g^{(k+1)}_{k-r}=0
 \eeq
 among the generators of $\OO_{D^{k+1}(f)}$ over $\OO_{D^k(f)}$.
 \end{lem}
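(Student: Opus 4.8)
The statement is the exact analogue, one level up, of the passage (\ref{1})--(\ref{8}) that produced the set (\ref{9}) and the relations $f^*\Lambda^1$. So the plan is to imitate that computation verbatim, replacing $f$ by $\pi^k_{k-1}(f)$ and the role of the variable $y$ by $y_k$. Concretely, I start from the relation (\ref{relo}) holding on $D^k(f)$, where by Lemma \ref{newgens} we may take $g^{(k)}_i$ to be the complete homogeneous symmetric polynomial of degree $i$ in $y_1,\ldots,y_k$ and the $\mu_i\in\OO_{D^{k-1}(f)}$ depend only on $\mathbf{x},y_1,\ldots,y_{k-1}$.

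First I would pull this relation back along the projection $\pi^{k+1}_k\colon D^{k+1}(f)\to D^k(f)$, which forgets the last entry. This gives two instances of (\ref{relo}) on $D^{k+1}(f)$: one with the variables $y_1,\ldots,y_{k-1},y_k$ (coming from the point $(\mathbf{x},y_1,\ldots,y_k)$) and one with $y_1,\ldots,y_{k-1},y_{k+1}$ (coming from $(\mathbf{x},y_1,\ldots,y_{k-1},y_{k+1})$); the coefficients $\mu_i$ are unchanged because they only involve $\mathbf{x},y_1,\ldots,y_{k-1}$, which are common to both points. Subtracting the two relations makes the $\mu_0 g^{(k)}_0=\mu_0$ terms cancel (since $g^{(k)}_0=1$), and leaves
$\sum_{i\geq 1}\mu_i\bigl(g^{(k)}_i(\mathbf{x},y_1,\ldots,y_{k-1},y_k)-g^{(k)}_i(\mathbf{x},y_1,\ldots,y_{k-1},y_{k+1})\bigr)=0.$
Next I divide by $y_k-y_{k+1}$. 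As in the $k=1$ case, $y_k-y_{k+1}$ is not a zero-divisor on $D^{k+1}(f)$: its zero locus $\{y_k=y_{k+1}\}\cap D^{k+1}(f)$ is contained in the pullback of the non-immersive (or higher-diagonal) locus and so has dimension strictly less than $\dim D^{k+1}(f)=n-k$, hence $y_k-y_{k+1}$ is regular on the Cohen--Macaulay (indeed, reduced, $\dim>0$) scheme $D^{k+1}(f)$ — this is exactly the hypothesis that keeps the iteration going. Dividing the $i$-th bracket by $y_k-y_{k+1}$ produces precisely $g^{(k+1)}_{i-1}$ by the defining formula (\ref{quot}), so after re-indexing $j=i-1$ we obtain $\sum_{j=0}^{r-k}\mu_{j+1}\,g^{(k+1)}_j=0$, which is (\ref{reln}).

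The one point that needs a word of care — and the only real obstacle — is the claim that division by $y_k-y_{k+1}$ is legitimate, i.e. that it is a non-zero-divisor on $D^{k+1}(f)$. I would justify this via the iteration principle: by Proposition \ref{finprince}, $\OO_{D^{k+1}(f)}\cong\OO_{D^2(\pi^k_{k-1}(f))}$, and under this identification $y_k-y_{k+1}$ becomes the difference of the two $y$-coordinates on $D^2$ of the corank-$1$ germ $\pi^k_{k-1}(f)$; its vanishing locus is the non-immersive locus of that germ, which (by the dimension hypothesis on the $D^k(f)$, equivalently on $\pi^k_{k-1}$) has dimension $n-k-1<n-k$, so it contains no component of $D^{k+1}(f)$. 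Since the iterated scheme is Cohen--Macaulay, any such element of the maximal ideal not vanishing on a component is a non-zero-divisor, which is all that is needed. Everything else is the bookkeeping already carried out for $k=1$.
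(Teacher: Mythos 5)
Your argument is correct and is essentially the paper's own proof: evaluate the relation \eqref{relo} at $(\mathbf{x},y_1,\ldots,y_k)$ and at $(\mathbf{x},y_1,\ldots,y_{k-1},y_{k+1})$ for $(\mathbf{x},y_1,\ldots,y_{k+1})\in D^{k+1}(f)$, note the $\mu_i$ and the constant term are unchanged, subtract, and divide by $y_k-y_{k+1}$, the quotients being the $g^{(k+1)}_{i-1}$ by \eqref{quot}. Your explicit check that $y_k-y_{k+1}$ is a non-zero-divisor on $D^{k+1}(f)$ (via the dimension count, or via Proposition \ref{finprince}) is exactly the justification the paper leaves implicit by referring back to the $k=1$ computation, and it is available here because the section's standing hypothesis is that $f$ is stable, so $D^{k+1}(f)$ is smooth.
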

 \begin{proof}
 The procedure is exactly what we described in equations \eqref{1} through \eqref{8}.
If $(x,y_1,\ld,y_{k+1})\in D^{k+1}(f)$ then subtracting \eqref{relo}, evaluated at $(\mathbf{x},y_1,\ld,y_{k-1},y_{k+1})$, from its evaluation at $(\mathbf{x},y_1,\ld,y_k)$, we kill the first term. Note that the values of the $\mu_i$ do not change, since $\pi^{k}_{k-1}(\mathbf{x},y_1,\ld,y_{k-1},y_{k+1})=\pi^{k}_{k-1}(\mathbf{x},y_1,\ld,y_{k})$. Now dividing by $y_{k}-y_{k+1}$, we obtain 
\eqref{reln}.
\end{proof} 
\begin{thm}\label{thmstable} 
$f^*\Lambda^k_k$ is a presentation of $\OO_{D^{k+1}(f)}$ over $\OO_{D^k(f)}$ with respect
to the generators $g^{(k+1)}_0,\ld,g^{(k+1)}_{r-k}$ for all $k\leq \textnormal{min}(r,n)$.
\end{thm}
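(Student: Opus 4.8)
I would argue by induction on $k$. The base case $k=1$ is Lemma \ref{fp} (there the generators $g^{(2)}_0,\ld,g^{(2)}_{r-1}$ are the set \eqref{9}), and the inductive step repeats the argument of that lemma almost verbatim, with Lemmas \ref{newgens} and \ref{newrelns} now doing the work done for $k=1$ by the explicit computations \eqref{1}--\eqref{9}.

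So assume the statement for $k-1$, that is, $f^*\Lambda^{k-1}_{k-1}$ presents $\OO_{D^k(f)}$ over $\OO_{D^{k-1}(f)}$ with respect to $g^{(k)}_0=1,g^{(k)}_1,\ld,g^{(k)}_{r-k+1}$. Feeding each column of $f^*\Lambda^{k-1}_{k-1}$, read as a relation among the $g^{(k)}_i$, through Lemma \ref{newrelns} yields relations among $g^{(k+1)}_0,\ld,g^{(k+1)}_{r-k}$ over $\OO_{D^k(f)}$; since that lemma merely discards the coefficient of $g^{(k)}_0$ and re-indexes, the relations arising from all columns of $\Lambda^{k-1}_{k-1}$ but the first are precisely the columns of $f^*\Lambda^k_k$ (the one arising from the first column is, just as in \eqref{k2}, an $\OO_{D^k(f)}$-combination of the others by the symmetry of $\Lambda^{k-1}_{k-1}$, and plays no role). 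By Lemma \ref{newgens} the $g^{(k+1)}_i$ minimally generate $\OO_{D^{k+1}(f)}$ over $\OO_{D^k(f)}$, so there is an epimorphism $\alpha\colon\text{coker }f^*\Lambda^k_k\to\OO_{D^{k+1}(f)}$, and the square matrix $f^*\Lambda^k_k$ fits into a diagram of the shape \eqref{nub} whose top row is a minimal presentation $A$ of $\OO_{D^{k+1}(f)}$ with respect to the same generators. This top row has length $1$ — $D^k(f)$ is smooth and $D^{k+1}(f)$ is Cohen--Macaulay of codimension $1$ in it — so $A$ is square, and by Proposition \ref{propqf} it is $(r-k+1)\times(r-k+1)$. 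Exactly as in Lemma \ref{fp}, the lift $\alpha_0$ is a surjective endomorphism of a free module over a local ring, hence an isomorphism, so $\det\alpha_0$ is a unit.

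Everything then reduces to showing that $\det f^*\Lambda^k_k$ and $\det A$ generate the same ideal of $\OO_{D^k(f)}$: commutativity of the diagram forces $\det\alpha_1$ to be a unit, hence $\alpha_1$ an isomorphism, and then a diagram chase makes $\alpha$ an isomorphism, which completes the induction. As in Lemma \ref{fp}, $\det A$ generates the conductor of $\OO_{D^k(f)}$ into the image $D^{k+1}_k(f)$ of $\pi^{k+1}_k$, taken with its reduced structure since $\pi^{k+1}_k$ is generically one-to-one. For $\det f^*\Lambda^k_k$ I would apply \cite[Theorem 3.4 and Lemma 3.3]{mond-pellikaan} to the map $\pi^k_{k-1}\colon D^k(f)\to D^{k-1}(f)$: it is finite, of corank $1$, generically one-to-one, and — here is where the inductive hypothesis is used — presented over its target by the symmetric matrix $\Lambda^{k-1}_{k-1}$. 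This gives that $\det f^*\Lambda^k_k=\det\bigl(f^*\Lambda^{k-1}_{k-1}\bigr)^1_1$ generates the conductor of $\OO_{D^k(f)}$ into $\OO_{\pi^k_{k-1}(D^k(f))}$, a reduced ideal supported on $(\pi^k_{k-1})^{-1}\bigl(M_2(\pi^k_{k-1})\bigr)$. By the iteration isomorphism $D^2(\pi^k_{k-1})\cong D^{k+1}(f)$ over $D^k(f)$ of Proposition \ref{finprince}, that support is exactly $D^{k+1}_k(f)$; so $\det f^*\Lambda^k_k$ and $\det A$ are both reduced equations of $D^{k+1}_k(f)$ and therefore generate the same ideal.

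The crux — the only step that is more than bookkeeping — is this last identification of $\det f^*\Lambda^k_k$ with a reduced equation of $D^{k+1}_k(f)$. Its delicate ingredients are the legitimacy of applying Mond--Pellikaan's conductor formula to the projection $\pi^k_{k-1}$ with the symmetric presentation supplied by the inductive hypothesis, and the fact that the two candidate equations cut out reduced subschemes of $D^k(f)$, which rests on the smoothness of the $D^j(f)$ in the stable case together with the generic injectivity of $\pi^k_{k-1}$ and $\pi^{k+1}_k$. The hypothesis $k\le\min(r,n)$ enters to ensure that $D^k(f)$ has positive dimension, so that $D^{k+1}_k(f)$ really is a hypersurface, the divisions in Lemma \ref{newrelns} are by non-zero-divisors, and $\Lambda^k_k$ is a non-degenerate square matrix of positive size $r-k+1$.
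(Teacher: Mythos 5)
Your proposal is correct and follows essentially the same route as the paper: induction on $k$ with Lemma \ref{fp} as base case, Lemmas \ref{newgens} and \ref{newrelns} supplying the new generators and relations, the diagram \eqref{nib}, and the identification of $\det f^*\Lambda^k_k$ and $\det A$ up to a unit via conductors. The paper compresses that last step into ``the argument of the proof of \ref{fp}, applied to the analogous diagram''; your spelling out of it -- applying the Mond--Pellikaan conductor result to $\pi^k_{k-1}$ with the symmetric presentation from the inductive hypothesis, and using the iteration principle to identify the support with $D^{k+1}_k(f)$ -- is exactly what that phrase is meant to invoke.
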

\begin{proof} Induction on $k$. Once again, the induction step follows the proof of  the case $k=2$. Assume  $k\leq \textnormal{min}(r,n)$ so that $\textnormal{dim }D^{k+1}(f)\geq 0$ and $\Lambda_{k-1}^{k-1}$ has size $\geq 1$. Suppose
inductively that $f^*\Lambda^{k-1}_{k-1}$ is a presentation of $\OO_{D^{k}(f)}$ over $\OO_{D^{k-1}(f)}$.  Since $\OO_{D^{k+1}(f)}$ is a graded module -- $D^{k+1}(f)$ is smooth -- we may assume that this presentation is given with respect
to the generators $g^{(k)}_0,\ld,g^{(k)}_{r-k+1}$. Then by Lemmas \ref{newgens} and \ref{newrelns},  
$f^{(k)*}\Lambda^{k}_k$ presents an $\OO_{D^k(f)}$-module which maps epimorphically to $\OO_{D^{k+1}(f)}$. Let $A$ be a square, minimal presentation of $\OO_{D^{k+1}(f)}$ with respect to $g^{(k+1)}_0,\ld,g^{(k+1)}_{r-k}$. The argument of the proof of \ref{fp}, applied to the analogous diagram 
\beq\label{nib}
\xymatrix{0\ar[r]&\OO_{D^k(f)}^{r-k}\ar[r]^{A}&\OO_{D^k(F)}^{r-k}\ar[r]&\OO_{D^{k+1}(f)}\ar[r]&0\ \ \\
0\ar[r]&\OO_{D^k(f)}^{r-k}\ar[r]^{\Lambda^k_k}\ar[u]_{\alpha_1}&\OO_{D^k(f)}^{r-k}\ar[r]\ar[u]_{\alpha_0}&\text{coker }\Lambda^k_k\ar[r]\ar[u]_{\alpha}&0\ ,}
\eeq
proves that $f^{(k)*}\Lambda^{k}_k$ is a presentation of $\OO_{D^{k+1}(f)}$ over $\OO_{D^{k}(f)}$. In fact, $f^{(k)*}\Lambda^{k}_k=f^{*}\Lambda^{k}_k$ since $\pi^{k}_{k-1}(f)$ is only a projection onto the first $(k-1)$ components.
\end{proof} 

\subsection{Proof of the main theorem}\label{sect-main} 
Assume that $q(f)=r+1$. Let $F\in \mathcal{E}_{n+d,n+d+1}^0$ be a parametrised stable unfolding of $f$.  Then, $\pi^{k+1}_k(F)$ is an unfolding of $\pi^{k+1}_k(f)$ and
\beq D^{k+1}(f)\cong D^{k+1}(F)\times_{D^{k}(F)} D^{k}(f).\eeq
By an induction on $k$ and the principal of iteration, we deduce that $\pi^{k+1}_k(f)$ is also generically one-to-one and that $D^{k+1}(f)$ has the expected dimension, $n-k$. 
By Theorem \ref{thmstable}, $F^*\Lambda^k_k$ presents  $\mathcal{O}_{D^{k+1}(F)}$ over $\mathcal{O}_{D^k(F)}$ for all $k\leq \textnormal{min}(r,n)$.
Hence, Proposition \ref{glambda}, applied to $\pi^{k+1}_k(f)$, shows that 
\beq\label{kst} 
\xymatrix{0\ar[r] & \OO_{D^{k}(f)}^{r-k+1}\ar[rr]^{(F^*\Lambda^{k}_{k})|_{\textbf{u}=0}} && \OO_{D^{k}(f)}^{r-k+1}\ar[r] & \OO_{D^{k+1}(f)} \ar[r] &0.}
\eeq
is exact. Since $\Lambda|_{\textbf{u}=0}$ is a presentation of $\mathcal{O}_{\mathbb{C}^n,0}$ over $\mathcal{O}_{\mathbb{C}^{n+1},0}$, the main result follows.\eop

\vskip8pt
\noindent Consequently, 
\begin{cor}[cf.\ Lemma 3.9, \cite{klu-curv}]\label{fitpf} Let $f\in \mathcal{E}_{n,n+1}^0$ be finite and generically one-to-one map-germ of corank 1. Then \begin{equation}\label{efitpf} \textnormal{Fitt}_{i}(f^*f_*\mathcal{O}_{\mathbb{C}^n,0})=\textnormal{Fitt}_{i-1}((\pi^2_1)_*\mathcal{O}_{D^2(f)})
\end{equation} for $i\geq 1$.
\end{cor}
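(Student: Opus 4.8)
The plan is to deduce the Corollary from the Main Theorem by a direct comparison of Fitting ideals, using the standard fact that Fitting ideals are computed from any finite presentation and are invariant under the choices involved.

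First I would record the presentation of $f^*f_*\OO_{\CC^n,0}$. Since $0\to\OO_{\CC^{n+1},0}^{r+1}\xrightarrow{\Lambda}\OO_{\CC^{n+1},0}^{r+1}\xrightarrow{G}\OO_{\CC^n,0}\to 0$ is a resolution, applying $f^*=-\otimes_{\OO_{\CC^{n+1},0}}\OO_{\CC^n,0}$ gives a presentation $\OO_{\CC^n,0}^{r+1}\xrightarrow{f^*\Lambda}\OO_{\CC^n,0}^{r+1}\to f^*f_*\OO_{\CC^n,0}\to 0$ (right-exactness of tensor product). Hence $\textnormal{Fitt}_i(f^*f_*\OO_{\CC^n,0})$ is the ideal of $(r+1-i)\times(r+1-i)$-minors of $f^*\Lambda$ in $\OO_{\CC^n,0}$, for $i\geq 0$.

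Next I would treat the right-hand side. By the Main Theorem with $k=1$, the sequence $0\to\OO_{D^1(f),0}^{r}\xrightarrow{f^{(1)*}(\Lambda^1_1)}\OO_{D^1(f),0}^{r}\to\OO_{D^2(f),0}\to 0$ is exact; recalling the paper's convention $D^1(f)=(\CC^n,0)$ and the observation in the proof of Theorem \ref{thmstable} that $f^{(1)*}$ is just pullback along the identity on the first components so $f^{(1)*}\Lambda^1_1=f^*\Lambda^1_1$, this is $0\to\OO_{\CC^n,0}^{r}\xrightarrow{f^*\Lambda^1_1}\OO_{\CC^n,0}^{r}\to(\pi^2_1)_*\OO_{D^2(f)}\to 0$. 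Therefore $\textnormal{Fitt}_{i-1}((\pi^2_1)_*\OO_{D^2(f)})$ is the ideal of $(r-(i-1))\times(r-(i-1))=(r+1-i)\times(r+1-i)$-minors of $f^*\Lambda^1_1$.

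So the claim reduces to showing that, for every size $s=r+1-i$ with $i\geq 1$ (equivalently $s\leq r$), the ideal of $s\times s$-minors of $f^*\Lambda$ equals the ideal of $s\times s$-minors of the submatrix $f^*\Lambda^1_1$ obtained by deleting the first row and column. One inclusion is trivial: every minor of $f^*\Lambda^1_1$ is a minor of $f^*\Lambda$. For the reverse inclusion I would use the relation, already exhibited in the text around \eqref{k2}, that the first row of $f^*\Lambda$ is an $\OO_{\CC^n,0}$-linear combination of the remaining rows (by symmetry of $\Lambda$, $(1,y,\ldots,y^r)(f^*\Lambda)=0$ gives $(f^*\Lambda)(1,y,\ldots,y^r)^t=0$, so the first column, hence by symmetry the first row, is dependent on the others), and likewise the first column. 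Performing the corresponding row and column operations transforms $f^*\Lambda$ into a matrix whose first row and first column vanish; such operations do not change any Fitting ideal, and for a matrix with a zero first row and column the ideal of $s\times s$-minors ($s\geq 1$) coincides with that of its lower-right block, which is exactly $f^*\Lambda^1_1$. This gives the equality for all $i\geq 1$, which is the assertion of \eqref{efitpf}. The only mild subtlety — the ``main obstacle,'' such as it is — is bookkeeping the index shift and the size constraint $s\leq r$ so that the vanishing first row/column genuinely forces every $s\times s$ minor to live in the $\Lambda^1_1$-block; for $i=0$ (size $r+1$) the statement would fail, consistent with the hypothesis $i\geq 1$.
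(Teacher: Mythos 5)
Your proof is correct and takes essentially the route the paper intends: Corollary \ref{fitpf} is stated as an immediate consequence of the Main Theorem (case $k=1$), with $f^*f_*\mathcal{O}_{\mathbb{C}^n,0}$ presented by $f^*\Lambda$ and $(\pi^2_1)_*\mathcal{O}_{D^2(f)}$ by $f^*\Lambda^1_1$, and the two ideals of $s\times s$ minors identified via the relation $(1,y,\ldots,y^r)\,f^*\Lambda=0$ and row/column operations, exactly as you do. Only your closing aside is slightly off: for $i=0$ both sides are in fact the zero ideal (since $\det f^*\Lambda=0$, as $f^*\Lambda$ kills $(1,y,\ldots,y^r)^t$ over the domain $\mathcal{O}_{\mathbb{C}^n,0}$), so the equality does not actually fail there — but this does not affect your argument for $i\geq 1$.
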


The following examples show that  for map-germs of corank $\geq 2$,  in general (\ref{efitpf}) holds only for $i=1,2$. 
\begin{example}\label{ex-fitpf1} For the stable map-germ of Example \ref{ex1},
one calculates that
 $$\textnormal{Fitt}_{i}(h^*h_*\mathcal{O}_{\mathbb{C}^7,0})=
\textnormal{Fitt}_{i-1}((\pi^2_1(h))_*\mathcal{O}_{D^2(h)})$$ for $i=1,2$, 
but
$$\textnormal{Fitt}_{j}({h}^*{h}_*\mathcal{O}_{\mathbb{C}^n,0})\neq 
\textnormal{Fitt}_{j-1}((\pi^2_1({h}))_*\mathcal{O}_{D^2({h})})$$
 for $j=3,4$.
The same equalities, and inequalities, hold for the map-germ $\tilde h$ of Example 
\ref{ex2}.
\end{example}

Another corollary to the main theorem is the following.
\begin{cor}\label{freediv} With the hypotheses of the main theorem, if $f$ is stable, then
\beq S:=\{\det (f^*\Lambda^{k-1}_{k-1})\cdot \det (f^*\Lambda^k_k)=0\}\eeq is a 
free divisor in $D^k(f)$ for $k=1,\ldots, \textnormal{min}(r,n)$ 
(where we take $\Lambda^0_0=\Lambda $).\end{cor}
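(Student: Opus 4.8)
The plan is to deduce this from Theorem~\ref{thmstable} together with the characterization of free divisors via the Aleksandrov--Terao criterion: a hypersurface $S\subset D^k(f)$ is a free divisor if and only if the module of logarithmic vector fields $\mathrm{Der}(-\log S)$ is free of rank $\dim D^k(f)$, and by the criterion of Aleksandrov/Terao this holds precisely when the Jacobian ideal of a defining equation of $S$ is Cohen--Macaulay of codimension $2$ (equivalently, the non-free locus of the presentation has the expected codimension). Since $f$ is stable of corank $1$, $D^k(f)$ is smooth of dimension $n-k+1$ by \cite{marar-mond}, so the ambient space is a manifold and the usual theory of free divisors applies verbatim.

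First I would recall from Theorem~\ref{thmstable} (and Lemma~\ref{fp} for $k=1$) that $f^*\Lambda^{k-1}_{k-1}$ presents $\OO_{D^k(f)}$ over $\OO_{D^{k-1}(f)}$ and $f^*\Lambda^k_k$ presents $\OO_{D^{k+1}(f)}$ over $\OO_{D^k(f)}$; in particular both are square matrices over $\OO_{D^k(f)}$ whose cokernels are Cohen--Macaulay $\OO_{D^k(f)}$-modules supported on hypersurfaces (the images $D^k_{k-1}(f)$ and $D^{k+1}_k(f)$ respectively, with reduced structure since the relevant projections are generically one-to-one). The determinant $\det(f^*\Lambda^{k-1}_{k-1})$ generates the conductor of $\OO_{D^k(f)}$ into $\OO$ of the image of $\pi^k_{k-1}$, as in the proof of Lemma~\ref{fp}, and likewise $\det(f^*\Lambda^k_k)$ generates the conductor attached to $\pi^{k+1}_k$. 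The key structural input is that each $f^*\Lambda^j_j$ is a \emph{symmetric} matrix (it is a submatrix of the symmetric $\Lambda$ obtained by deleting equal-index rows and columns), so each cokernel is a maximal Cohen--Macaulay module presented by a square symmetric matrix over the hypersurface ring it defines; by the theory of matrix factorizations such a module, being the cokernel of a symmetric (hence self-dual) presentation, gives rise to a free divisor structure on its support --- this is precisely the argument of Mond--van~Straten / Damon for determinantal and Cohen--Macaulay free divisors.

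Then I would assemble: $S$ is defined by the product $\det(f^*\Lambda^{k-1}_{k-1})\cdot\det(f^*\Lambda^k_k)$, i.e. $S = D^k_{k-1}(f)\cup D^{k+1}_k(f)$ inside the manifold $D^k(f)$. Each piece is a free divisor by the previous paragraph, and the standard result (again Mond--van~Straten, building on Terao's addition theorem, or Damon's theory of free divisors as pullbacks) says that the union of two free divisors is again free provided their intersection is a free divisor in each of them of codimension one; here the intersection $D^k_{k-1}(f)\cap D^{k+1}_k(f)$ is the locus where $\pi^k_{k-1}$ and $\pi^{k+1}_k$ both degenerate, which by the smoothness and stability of all the $\pi^j_{j-1}$ (\cite{goryunov}) is again smooth --- or more precisely, is the image of $D^{k+1}(f)\cup D^k_{k-2}(f)$-type strata, and one checks its codimension is one in each component. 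The cleanest route is to invoke the iteration principle (\ref{iprince}): $\det(f^*\Lambda^k_k)$ pulled back along $\pi^k_{k-1}$ is, up to units, the conductor equation of $\pi^{k+1}_k$ viewed over $D^{k-1}(f)$, so the pair $(D^k(f), S)$ is the pullback under a submersion of the ``stable discriminant-type'' free divisor situation in one lower $k$, and freeness is preserved under transverse pullback.

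The main obstacle will be verifying the codimension-one condition on the intersection $D^k_{k-1}(f)\cap D^{k+1}_k(f)$ --- equivalently, checking that the two conductor determinants are coprime and that their common zero locus has the expected codimension in $D^k(f)$. This is where stability is genuinely used: one must show that the non-immersive locus contributing to $\det(f^*\Lambda^{k-1}_{k-1})$ and the one contributing to $\det(f^*\Lambda^k_k)$ meet transversally, which follows from the smoothness of $\pi^{k}_{k-1}$ and $\pi^{k+1}_k$ but requires a small local computation with the iterated divided differences $R^j_i$ of (\ref{idk}). Everything else --- the symmetry of $\Lambda^j_j$, the Cohen--Macaulayness of the cokernels, the identification of the defining equations with conductors --- is already in hand from the results quoted above.
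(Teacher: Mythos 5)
There is a genuine gap, and it sits exactly at the step you lean on hardest. You claim that because each $f^*\Lambda^j_j$ is symmetric and its cokernel is Cohen--Macaulay, the hypersurface $\{\det (f^*\Lambda^j_j)=0\}$ is by itself a free divisor (``by the theory of matrix factorizations \dots Mond--van Straten / Damon''), and you then propose to glue two such divisors by an addition-type theorem. No such general principle exists, and the first factor already gives a counterexample: for the cross-cap $(x,y)\mapsto(x,y^2,xy)$, whose pushforward has a symmetric $2\times 2$ presentation $\Lambda$ (Mond--Pellikaan) with Cohen--Macaulay cokernel, the support of that cokernel is the Whitney umbrella $\{z^2=x^2y\}$, whose Jacobian ideal $(x^2,xy,z)$ has a quotient of dimension $1$ and depth $0$; so it fails precisely the Aleksandrov--Terao criterion you quote, and is not free. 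The whole content of the result this corollary rests on is that the \emph{product} $\det\Lambda\cdot\det\Lambda^1_1$ is free even though neither factor need be, so a proof scheme of the form ``each piece is free, then apply an addition theorem'' cannot work; moreover Terao's addition--deletion is a statement about hyperplane arrangements, and there is no general theorem that a union of two free divisors meeting in codimension one is free, so the gluing step would be unsupported even if the pieces were free.

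The paper's proof is instead a direct application of Theorem 1.2 of \cite{mond-schulze}: if $F$ is a stable corank~1 germ $(\CC^m,0)\to(\CC^{m+1},0)$ and $\Lambda$ is a symmetric presentation of $F_*\OO_{\CC^m}$ with respect to generators $1,g_1,\ld,g_s$, then $\det\Lambda\cdot\det\Lambda^1_1$ defines a free divisor in the target. Since $f$ is stable, the projection $\pi^{k+1}_k\colon D^{k+1}(f)\to D^k(f)$ is again stable of corank~1 between smooth germs (Goryunov), and Theorem \ref{thmstable} says exactly that $f^*\Lambda^k_k$ is a symmetric presentation of $\OO_{D^{k+1}(f)}$ over $\OO_{D^k(f)}$, with $(f^*\Lambda^k_k)^1_1=f^*\Lambda^{k+1}_{k+1}$; the freeness of the product is then the cited theorem, whose nontrivial part -- constructing a basis of logarithmic vector fields out of the symmetric presentation -- is precisely what your sketch bypasses. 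A smaller but real issue is your identification $S=D^k_{k-1}(f)\cup D^{k+1}_k(f)$ inside $D^k(f)$: the determinant of the presentation of $\OO_{D^k(f)}$ over $\OO_{D^{k-1}(f)}$ cuts out the image of $\pi^k_{k-1}$ in $D^{k-1}(f)$, so its pullback to $D^k(f)$ vanishes identically; the divisor that Mond--Schulze actually produces in $D^k(f)$ is $\{\det(f^*\Lambda^k_k)\cdot\det(f^*\Lambda^{k+1}_{k+1})=0\}$, the union of the image of $\pi^{k+1}_k$ and the preimage of the next such image, so the two components must be located with more care than in your decomposition.
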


\begin{proof} This is trivial for $k=1$:  $S=(\mathbb{C}^n,0)$. Assume 
that  $2\leq k \leq \textnormal{min}(r,n)$. Then, $\mathcal{O}_{D^{k+1}(f)}$ is 
presented over $\mathcal{O}_{D^k(f)}$ by $f^*\Lambda^k_k$. Since $f$ is 
stable, $\pi^{k+1}_k$ is also stable of corank 1. Therefore, the statement 
follows by \cite[Theorem 1.2]{mond-schulze} which states that if  
$F:(\CC^r,0)\to(\CC^{r+1},0)$ is stable and of corank 1, and if $\Lambda$ is a symmetric
presentation of $F_*\OO_{\CC^r}$ with respect to generating set $1,g_1,\ld, g_s$, then
$\det \Lambda\cdot \det 
\Lambda^1_1$ defines a free divisor in $\CC^{r+1}$.
  \end{proof}

\begin{rem} With the hypotheses of the main theorem, the ideal 
\beq \bigl(\det f^*\Lambda^1_1,\ \det f^*\Lambda^2_2, \ld,
\ \det f^*\Lambda^{k-1}_{k-1}\bigr)\eeq
gives $D^k_1(f)$ the structure of a (possibly non-reduced) 
complete intersection.
\end{rem}

\begin{rem} It is slightly unexpected 
that, as the main theorem shows, for a corank 1 germ there is a presentation of
$\OO_{D^{k+1}(f)}$ over $\OO_{D^k(f)}$ whose entries are the pull-backs to 
$D^k(f)$ of functions
on the target of $f$. For the simplest map-germ of corank 2,
$$f(a,b,c,d,x,y)=(a,b,c,d,x^2+ay,xy+bx+cy,y^2+dx)$$
this cannot be the case even for a presentation of $\OO_{D^2(f)}$ over $\OO_{D^1(f)}$.
For, as an easy calculation shows, the Fitting ideal $\textnormal{Fitt}_3((\pi^2_1)_*\OO_{D^2(f)})$ is contained
in $\mathfrak{m}_{\CC^6,0}$ but not contained in $f^*(\mathfrak{m}_{\CC^7,0})$.
\end{rem}


\section{A different interpretation of \texorpdfstring{$D^2_1(f)$}{D21(f)}}\label{sect-diff}

In this section, we identify $\Lambda^1_1$ as the matrix of a presentation of the 
kernel of the multiplication map 
$\mu \colon \mathcal{O}_{\mathbb{C}^{n},0} 
\otimes_{\mathcal{O}_{\mathbb{C}^{n+1}},0}\mathcal{O}_{\mathbb{C}^n,0}  \rightarrow 
\mathcal{O}_{\mathbb{C}^n,0}$, for a finite map-germ $f\in \mathcal{E}_{n,n+1}^0$. 
Suppose that $Q(f)=\mathbb{C}\cdot \{1, g_1,\ldots, g_r\}$. Then 
$\mathcal{O}_{\mathbb{C}^n,0}=\mathcal{O}_{\mathbb{C}^{n+1},0}\cdot 
\{1,g_1,\ldots,g_r\}$. Moreover, $\textnormal{Ker}(\mu)=\mathcal{O}_{\mathbb{C}^n,0}
\cdot \{g_1\otimes 1 -1\otimes g_1, \ldots, g_r\otimes 1 - 1\otimes g_r\}$.

\begin{prop}\label{pkerm} Let $f\in\mathcal{E}_{n,n+1}^0$ be finite and generically one-to-one. Suppose that
\beq \label{sequence} \xymatrix{0\ar[r]&\OO_{\CC^{n+1},0}^{r+1}\ar[r]^\Lambda&\OO_{\CC^{n+1},0}^{r+1}\ar[r]^G&\OO_{\CC^n,0}\ar[r]&0} \eeq is a presentation in which $\Lambda$ is symmetric and the first element of the generating set $G$ is 
equal to 1. 
Then the following sequence is exact:
\begin{equation*}
\xymatrix{
0 \ar[r] &  \mathcal{O}_{\mathbb{C}^{n},0}^{r} \ar[r]^{f^*\Lambda^1_1} &  \mathcal{O}_{\mathbb{C}^{n},0}^{r} \ar[r]^{\Delta G} & \textnormal{Ker}(\mu) \ar[r] & 0} \end{equation*} where $\Delta G$ is defined by
$\hat{e}_i \mapsto g_{i}\otimes 1-1\otimes g_{i}$ for $ i=1,\ldots, r.$ \end{prop}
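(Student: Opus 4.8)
The plan is to build the exact sequence by the same strategy as in Lemma~\ref{fp}: first exhibit $f^*\Lambda^1_1$ as giving \emph{relations} among a natural generating set for $\ker(\mu)$, then identify the cokernel of $f^*\Lambda^1_1$ with $\ker(\mu)$ by a determinant/conductor argument. The generating set is $\{g_i\otimes 1-1\otimes g_i : i=1,\ldots,r\}$, which spans $\ker(\mu)$ over $\OO_{\CC^n,0}$ (as recalled just before the statement). Writing the relation coming from the $i$'th column of $\Lambda$, namely $\sum_{j=0}^r \lambda_{ij}(f)\,g_j=0$ in $\OO_{\CC^n,0}$, and tensoring on the left and on the right, one gets $\sum_j (\lambda_{ij}\circ\mu)\,(g_j\otimes 1)=0$ and $\sum_j (\lambda_{ij}\circ\mu)\,(1\otimes g_j)=0$ in $\OO_{\CC^n,0}\otimes_{\OO_{\CC^{n+1},0}}\OO_{\CC^n,0}$; here I use that for $h\in\OO_{\CC^{n+1},0}$ we have $f^*h\otimes 1=1\otimes f^*h$, so $\lambda_{ij}(f)$ acts unambiguously. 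Subtracting, the $j=0$ term cancels ($g_0=1$), and we obtain $\sum_{j=1}^r (\lambda_{ij}\circ\mu)\,(g_j\otimes 1-1\otimes g_j)=0$, which says precisely that each column of $f^*\Lambda^1_1$ is a relation among $\{g_j\otimes 1-1\otimes g_j\}$ — exactly the analogue of equations \eqref{1}–\eqref{8}, but one step simpler since no division is involved. This gives a surjection $\coker(f^*\Lambda^1_1)\twoheadrightarrow\ker(\mu)$, and since $\det f^*\Lambda^1_1$ is not identically zero (by Proposition~\ref{glambda} / the dimension argument, $f$ being generically one-to-one in dimensions $(n,n+1)$), the map $f^*\Lambda^1_1\colon\OO_{\CC^n,0}^r\to\OO_{\CC^n,0}^r$ is injective, so it remains only to prove the surjection is an isomorphism.

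For the final identification I would compare $\coker(f^*\Lambda^1_1)$ with a minimal presentation $A$ of $\ker(\mu)$ via the lifting diagram
\begin{equation*}
\xymatrix{0\ar[r]&\OO_{\CC^n,0}^r\ar[r]^{A}&\OO_{\CC^n,0}^r\ar[r]&\ker(\mu)\ar[r]&0\\
0\ar[r]&\OO_{\CC^n,0}^r\ar[r]^{f^*\Lambda^1_1}\ar[u]_{\alpha_1}&\OO_{\CC^n,0}^r\ar[r]\ar[u]_{\alpha_0}&\coker f^*\Lambda^1_1\ar[r]\ar[u]_{\alpha}&0}
\end{equation*}
where $\alpha$ is the surjection just constructed and $\alpha_0,\alpha_1$ are successive lifts; $A$ may be taken square since the relation module of the minimal generators is free of the same rank (the resolution of $\ker(\mu)$ over $\OO_{\CC^n,0}$ has length~$1$ because $\mu$ is surjective with kernel of projective dimension~$1$, or one can simply invoke Proposition~\ref{pkerm}'s companion facts on $\OO_{D^2(f)}$ together with the Kleiman–Lipman–Ulrich isomorphism \eqref{klueq1}). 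Then $\alpha_0$ is an epimorphism between free modules of equal rank, hence an isomorphism, so $\det\alpha_0$ is a unit; by commutativity of the left square $\det A\cdot\det\alpha_1=\det\alpha_0\cdot\det f^*\Lambda^1_1$ up to units, and since $\det A$ and $\det f^*\Lambda^1_1$ both generate the conductor of $\OO_{\CC^n,0}$ into $\OO_{f(\CC^n)}$ — the former because $\ker(\mu)\cong\OO_{D^2(f)}$ whose annihilator-determinant is the reduced image ideal when $\pi^2_1$ is generically one-to-one, the latter by \cite[Theorem 3.4 and Lemma 3.3]{mond-pellikaan} — they agree up to a unit. Therefore $\det\alpha_1$ is a unit, $\alpha_1$ is an isomorphism, and hence so is $\alpha$; this yields the asserted exact sequence.

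\textbf{Main obstacle.} The delicate point is justifying that $\det f^*\Lambda^1_1$ and $\det A$ really generate the \emph{same} ideal. For $\det f^*\Lambda^1_1$ this is the content of the conductor formula of Mond–Pellikaan, which applies here because $1,g_1,\ldots,g_r$ is a \emph{minimal} generating set and $\Lambda$ is the associated symmetric presentation. For $\det A$ one needs that the $0$'th Fitting ideal of $\ker(\mu)$ as an $\OO_{\CC^n,0}$-module is the (reduced) ideal of the image of $\pi^2_1$; this is where generic one-to-one-ness of $f$ is essential, since it forces $D^2(f)$ — or rather $\ker(\mu)$, which by \eqref{klueq1} carries the same structure in this setting — to have the conductor as its annihilator without embedded or multiple structure. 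An alternative, cleaner route that sidesteps the conductor bookkeeping: once $\alpha$ is known to be surjective and $f^*\Lambda^1_1$ injective, invoke the main theorem in the case $k=1$ together with \eqref{klueq1} to identify $\coker f^*\Lambda^1_1$ and $\ker(\mu)$ as abstractly isomorphic $\OO_{\CC^n,0}$-modules admitting presentations by $r\times r$ matrices, and deduce that the surjection $\alpha$ between them must be an isomorphism (a surjective endomorphism-type map of Noetherian modules of the same finite length-type data). I would present the conductor argument as the primary proof, as it parallels Lemma~\ref{fp} and is self-contained modulo cited results.
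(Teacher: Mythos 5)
There is a genuine gap in the second half of your argument, at exactly the point you flag as the ``main obstacle''. Your identification of $\det A$ with the conductor rests on the isomorphism $\textnormal{Ker}(\mu)\cong\OO_{D^2(f)}$ of \eqref{klueq1} (or on the main theorem), and these are available only for corank $1$; Proposition \ref{pkerm} is stated, and is later used by the paper, for finite generically one-to-one germs of \emph{any} corank, and the paper explicitly points out that for corank $\geq 2$ the two modules are \emph{not} isomorphic, so this step of your proof simply fails there. Even in the corank $1$ case the justification is shaky on two counts: first, within the paper's logic Proposition \ref{pkerm} together with the main theorem is meant to give an independent proof of \eqref{klueq1}, so invoking \eqref{klueq1} here is at cross purposes (your ``cleaner alternative route'' has the same defect); second, your claim that the Fitting ideal of $(\pi^2_1)_*\OO_{D^2(f)}$ is the \emph{reduced} ideal of the image of $\pi^2_1$ is only valid under smoothness/stability-type hypotheses as in Lemma \ref{fp} -- for a general finite, generically one-to-one germ (e.g.\ $f(x,t)=(x,t^2,t^3)$) the conductor is not radical, so ``reducedness'' cannot be the reason that $\det A$ and $\det f^*\Lambda^1_1$ generate the same ideal. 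A smaller unproved point: you take $A$ to be a square $r\times r$ minimal presentation of $\textnormal{Ker}(\mu)$; minimal generation by the $r$ elements $g_i\otimes 1-1\otimes g_i$ and projective dimension $1$ over $\OO_{\CC^n,0}$ are not established a priori (indeed the length-one resolution is the content of the proposition), so this is close to circular.

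The paper's proof avoids the comparison with a second presentation altogether, and this is where your proposal diverges in substance. It tensors \eqref{sequence} with $\OO_{\CC^n,0}$, uses the symmetry of $\Lambda$ to get $f^*\Lambda\cdot G^t=0$, and then proves the key exactness statement $\textnormal{Ker}(f^*\Lambda)=\textnormal{im}(G^t)$ directly: if $(b_0,\ldots,b_r)^t$ is in the kernel, subtracting $b_0\,G^t$ produces a syzygy of $f^*\Lambda^1_1$, which is impossible because $\det f^*\Lambda^1_1$ generates $f^*(\textnormal{Fitt}_1(f_*\OO_{\CC^n,0}))$ and is nonzero since $f$ is generically one-to-one. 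This yields the four-term exact sequence \eqref{les}, from which one splits off the free summand generated by $1\otimes 1$ to obtain the asserted presentation of $\textnormal{Ker}(\mu)$ by $f^*\Lambda^1_1$. Your first half (the columns of $f^*\Lambda^1_1$ are relations among the $g_i\otimes 1-1\otimes g_i$, and $f^*\Lambda^1_1$ is injective) is correct and corank-independent, but it only gives the surjection $\textnormal{coker}(f^*\Lambda^1_1)\twoheadrightarrow\textnormal{Ker}(\mu)$; the missing ingredient -- that every relation comes from $f^*\Lambda^1_1$ -- is exactly what the paper's kernel computation supplies, and your substitute for it does not work in the generality of the statement.
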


\begin{proof} We tensor (\ref{sequence}) on the right by 
$\mathcal{O}_{\mathbb{C}^n,0}$ over $\mathcal{O}_{\mathbb{C}^{n+1},0}$ to 
get \begin{equation*}  \mathcal{O}_{\mathbb{C}^{n},0}^{r+1}\xrightarrow{f^*\Lambda}  
\mathcal{O}_{\mathbb{C}^{n},0}^{r+1} \xrightarrow{G'} \mathcal{O}_{\mathbb{C}^{n},0} 
\otimes_{\mathcal{O}_{\mathbb{C}^{n+1},0}}\mathcal{O}_{\mathbb{C}^n,0}
\rightarrow 0 \end{equation*} where $G'\colon e_i\mapsto g_{i-1}\otimes 1$. By the 
exactness of (\ref{sequence}) and the fact that  $\Lambda$ is symmetric, 
$0=\left (G\cdot \Lambda\right )^t=\Lambda \cdot G^t$. So,  $G^t \colon 
\mathcal{O}_{\mathbb{C}^n,0}\rightarrow  \mathcal{O}_{\mathbb{C}^n,0}^{r+1}$ 
is a map into the kernel of $f^*\Lambda$. We claim that $\textnormal{im}(G^t)
=\textnormal{Ker}(f^*\Lambda)$. To see this, suppose that $B=
\begin{bmatrix}  b_0 &  b_1 & \ldots & b_r \end{bmatrix}^t$ 
lies in $\text{Ker}\,f^*\Lambda$, 
for some $b_0,\ldots,b_r\in \mathcal{O}_{\mathbb{C}^n,0}$. Then
\begin{equation*} 0=f^*\Lambda \cdot \big(\left[\begin{array}{c}  b_0 \\  b_1 \\ 
\ldots \\ b_r \end{array} \right]-b_0\left[\begin{array}{c}  1 \\  g_1 \\ 
\vdots \\  g_r \end{array} \right]\big)=f^*\Lambda\cdot \left[\begin{array}{c} 
0 \\  b_1-b_0g_1 \\ \vdots \\  b_r-b_0g_r \end{array} \right]. \end{equation*} 
and in particular $f^*(\Lambda^1_1)(b_1-b_0g_1,\ld,b_2-b_0g_r)^t=0$. However, 
$\det\,f^*\Lambda^1_1$ generates $f^*(\text{Fitt}_1(f_*\OO_{\CC^n,0}))$  
and is thus a defining equation of $D^2_1(f)$ (see e.g. \cite[Theorem 3.4 and 
Lemma 3.3]{mond-pellikaan}). Since $D^2_1(f)$  has codimension 1 in $D^1(f)$, 
$\det\,f^*\Lambda_1^1$ is not identically zero, and this forces $b_i=b_0g_i$ for 
$i=1,\ld,r$. 

Obviously, $G^t$ is one-to-one. Thus, we have the exact sequence
\begin{equation*}\label{exactfl}\xymatrix{ 0 \ar[r] &  \mathcal{O}_{\mathbb{C}^n,0} \ar[r]^{G^t} &
\mathcal{O}_{\mathbb{C}^n,0}^{r+1} \ar[r]^{f^*\Lambda} & \mathcal{O}_{\mathbb{C}^n,0}^{r+1} \ar[r]^-{G'} &
\mathcal{O}_{\mathbb{C}^{n},0}\otimes _{\mathcal{O}_{\mathbb{C}^{n+1},0} } \mathcal{O}_{\mathbb{C}^n,0} \ar[r] & 0 . } \end{equation*}
The fact that $f^*\lambda\cdot G^t=0$ means that the first column of $f^*\lambda$ is a linear combination of the remaining columns (recall that the first member of the set of generators, and thus the first entry in the column matrix $G^t$, is equal to $1$). Thus we obtain the exact sequence
\begin{equation}\label{les}\xymatrix{0 \ar[r] &  \mathcal{O}_{\mathbb{C}^n,0} \ar[r]^{\beta} &
\mathcal{O}_{\mathbb{C}^n,0}^{r+1} \ar[r]^{\alpha} & \mathcal{O}_{\mathbb{C}^n,0}^{r+1} \ar[r]^-{G''} &
\mathcal{O}_{\mathbb{C}^{n},0}\otimes _{\mathcal{O}_{\mathbb{C}^{n+1},0} } \mathcal{O}_{\mathbb{C}^n,0} \ar[r] & 0  } \end{equation} with
\begin{gather*}
\alpha=\left[\begin{array}{c|ccc} 0 & 0 & \ldots & 0 \\ \hline 0 & & \\ \vdots & & f^*\Lambda^1_1\\ 0 & & \end{array}\right], \hskip5pt \beta=\begin{bmatrix} 1 \\ 0 \\ \vdots \\ 0 \end{bmatrix}, \hskip5pt
G''\colon e_i \mapsto \begin{cases} 1\otimes 1 & \text{if $i=0$}, \\ g_{i}\otimes 1-1\otimes g_{i} & \text{if $i=1,\ldots, r$} \end{cases}.\end{gather*}
We minimalise \eqref{les} to the exact sequence
\begin{equation}\label{Kbullet}  0 \rightarrow \mathcal{O}_{\mathbb{C}^n,0}^{r}  \xrightarrow{\alpha_1}  \mathcal{O}_{\mathbb{C}^n,0}^{r+1} \xrightarrow{G''}
\mathcal{O}_{\mathbb{C}^{n},0}\otimes _{\mathcal{O}_{\mathbb{C}^{n+1},0} } \mathcal{O}_{\mathbb{C}^n,0} \rightarrow 0\end{equation} where $\alpha_1$ is the matrix obtained from $\alpha$ by deleting the first column. Since $\textnormal{Ker}(\mu)$ is generated by $\{g_{i}\otimes 1-1\otimes g_{i}\mid i=1,\ldots,r\}$,
$\mathcal{O}_{\mathbb{C}^{n},0} \otimes_{\mathcal{O}_{\mathbb{C}^{n+1},0}}\mathcal{O}_{\mathbb{C}^n,0}\cong\mathcal{O}_{\mathbb{C}^{n},0}\oplus \textnormal{Ker}(\mu)$. Hence, $f^*\Lambda_1^1$ gives a presentation of $\textnormal{Ker}(\mu)$. \end{proof}

\appendix
\section{Lifting of presentations}\label{app}

Consider the set
\begin{equation*}
\ID^k(f):=\{(x_1,\ldots,x_k)\in (\mathbb{C}^n)^k\mid f(x_1)=\cdots=f(x_k)\}
\end{equation*} which was referred to as  \textit{idiot's multiple point space} by Houston (\cite{houston-spectral}). Notice that $D^k(f)$ and $\ID^k(f)$ agree outside the diagonal of $(\mathbb{C}^n)^k$, and $D^k(f)\subset \ID^k(f)$.
In particular, $\ID^2(f)$ is the fibre product $(\mathbb{C}^n \times_{\mathbb{C}^{n+1}} \mathbb{C}^n,0)$ defined by the commutative diagram
\begin{equation*}\xymatrix{ (\mathbb{C}^{n},0) \ar[r]^f & (\mathbb{C}^{n+1},0) \\ (\mathbb{C}^n\times_{\mathbb{C}^{n+1}} \mathbb{C}^n,0) \ar[u]^{pr_1} \ar[r]^>>>>>{pr_2} & (\mathbb{C}^{n},0) \ar[u]_{f} }\end{equation*} where $pr_1$ (resp. $pr_2$) is the projection to the first (resp. the second) factor. The module $\OO_{\ID^2(f)}\cong \OO_{\mathbb{C}^{n},0}\otimes _{\OO_{\mathbb{C}^{n+1},0}} \OO_{\mathbb{C}^n,0} $ has two $\mathcal{O}_{\mathbb{C}^n,0}$-module structures induced from $pr_1$ and $pr_2$.
A resolution of $\OO_{\ID^2(f)}$ is given by (\ref{Kbullet}) and 
$\mathcal{O}_{\ID^2(f)}\cong \mathcal{O}_{\mathbb{C}^n,0}\oplus \textnormal{Ker}(\mu)$.

\begin{rem} If $f$ has corank 1, $\textnormal{Ker}(\mu)\cong \mathcal{O}_{D^2(f)}$  by Proposition \ref{pkerm} and the main theorem (cf. \cite[Prop. 3.2]{klu-curv}). We can form an isomorphism explicitly: define
\begin{eqnarray*} T\colon \textnormal{Ker}(\mu)  \rightarrow &(y_2-y_1)\mathcal{O}_{D^2(f)}  & \rightarrow \mathcal{O}_{D^2(f)}  \\  y\otimes 1-1\otimes y \mapsto &  y_2-y_1  &  \\  &  h & \mapsto h(y_2-y_1)^{-1}
\end{eqnarray*} and its inverse
\begin{eqnarray*} T^{-1}\colon   \mathcal{O}_{D^2(f)}  \rightarrow &(y_2-y_1)\mathcal{O}_{D^2(f)}  & \rightarrow  \textnormal{Ker}(\mu)  \\  h \mapsto  &  (y_2-y_1)h  &  \\  &  y_2-y_1 & \mapsto y\otimes 1-1\otimes y .
\end{eqnarray*}
So, $\mathcal{O}_{\ID^2}\rightarrow \mathcal{O}_{D^2}$ is the composition of
$\mathcal{O}_{\mathbb{C}^{n},0}\oplus \textnormal{Ker}(\mu)\rightarrow  \textnormal{Ker}(\mu)$ and  $T$. \end{rem}

 If $f$ has corank $\geq 2$ there is no isomorphism between $\textnormal{Ker}(\mu)$ and $\mathcal{O}_{D^2(f)}$. However, we can still compare the resolutions of $\mathcal{O}_{\ID^2}$ and  $\mathcal{O}_{D^2}$.

\begin{rem} Let $f\in\mathcal{E}_{n,n+1}^0$ be of corank $\geq 2$ with 
$Q(f)=\mathbb{C}\cdot \{1,g_1,\ldots, g_r\}$. Suppose that $q(f)=q(\pi^2_1)$. 
Let $\Lambda$ be the matrix of a presentation of $f_*\OO_{\CC^n}$ with respect to 
generators $1,g_1,\ld, g_r$, and let
$\Lambda_1$ denote $\Lambda$ with its first column removed.
We may take 
$Q(\pi^2_1)=\mathbb{C}\cdot \{1, g_1(\textbf{x}_2),\ldots, g_r(\textbf{x}_2)\}$. Then
\begin{equation*}\xymatrixcolsep{4pc}\xymatrix{ 0\ar[r] & \mathcal{O}_{\mathbb{C}^{n},
0}^r \ar[r]^{ f^*\Lambda_1} \ar[d]^B & \mathcal{O}_{\mathbb{C}^{n},0}^{r+1} 
\ar[r]^>>>>>>>{G'} \ar@{=}[d] & \mathcal{O}_{\ID^2}\ar[r] \ar[d]^{\hat{T}} & 0\\
0\ar[r] & \mathcal{O}_{\mathbb{C}^{n},0}^{r+1} \ar[r] & \mathcal{O}_{\mathbb{C}^{n},0}^{r+1} \ar[r]^<<<<<<<<<<{M} & \mathcal{O}_{D^2(f)} \ar[r] & 0}\end{equation*} is a commutative diagram with
$\hat{T}:g_i\otimes 1\mapsto g_i(\textbf{x}_2)$, $M=\begin{bmatrix} 1 & g_1(\textbf{x}_2) & \cdots & g_r(\textbf{x}_2) \end{bmatrix}$,
$G'=\begin{bmatrix} 1\otimes 1 & g_1\otimes 1-1\otimes g_1 & \cdots & g_r\otimes 1-1\otimes g_r\end{bmatrix}$ and $B$ the lift of the identity.
\end{rem}

\begin{rem}  Let $f\in\mathcal{E}_{n,n+1}^0$ be of corank $\geq 2$ with 
$Q(f)=\mathbb{C}\cdot \{1,g_1,\ldots, g_r\}$. Suppose that $q(f)\geq q(\pi^2_1)$. 
We may assume that $Q(\pi^2_1)=\mathbb{C}\cdot \{1, g_1(\textbf{x}_2),\ldots, 
g_{s}(\textbf{x}_2)\}$ 
($s\leq r$). Then there exist $a_{ij}\in \mathcal{O}_{\mathbb{C}^{n},0}$, 
$i,j=0,\ldots,s$, satisfying
$$g_{s+i}(\textbf{x}_2)=a_{i0}(\textbf{x}_1)+\sum_{j=1}^{s}a_{ij}(\textbf{x}_1)\cdot g_j(\textbf{x}_2)$$  for all $i=0,\ldots,r-1$. So,
\begin{equation*}\xymatrixcolsep{4pc}\xymatrix{ 0\ar[r] & \mathcal{O}_{
\mathbb{C}^{n},0}^r \ar[r]^{ f^*\Lambda_1} \ar[d]^B & \mathcal{O}_{\mathbb{C}^{n},
0}^{r+1} \ar[r]^>>>>>>>{G'} \ar[d]^A & \mathcal{O}_{\ID^2} \ar[r] \ar[d]^{\hat{T}} 
& 0\\ 0\ar[r] & \mathcal{O}_{\mathbb{C}^{n},0}^{s} \ar[r] & \mathcal{O}_{\mathbb{C}^{n},
0}^{s} \ar[r]^<<<<<<<<<<{M} & \mathcal{O}_{D^2(f)} \ar[r] & 0}\end{equation*} 
is a commutative diagram with 
$\hat{T}\colon g_i\otimes 1\mapsto g_i(\textbf{x}_2)$,   $M=\begin{bmatrix} 1 & 
g_1(\textbf{x}_2) & \cdots & g_{s}(\textbf{x}_2) \end{bmatrix}$,  
$G'=\begin{bmatrix} 1\otimes 1 & g_1\otimes 1-1\otimes g_1 & \cdots & g_r\otimes 
1-1\otimes g_r\end{bmatrix}$,
$A=\left[\begin{array}{c|c} &  \\ \textnormal{Id}_{r\times r} & (a_{ij}) \\ 
& \end{array}\right]$ and $B$ the lift of  $A$.
\end{rem}

\bibliographystyle{amsplain}
\bibliography{a-reference}

\end{document}